\documentclass[10pt]{amsart}

\usepackage{amsmath, amsthm, amscd, amsfonts, amssymb, graphicx, color}
\usepackage{latexsym}
\usepackage{amscd}
\usepackage{amsthm}
\usepackage{amssymb}
\usepackage{enumerate}
\usepackage{mathabx}

\theoremstyle{plain}
\newtheorem{theorem}{Theorem}
\newtheorem{corollary}[theorem]{Corollary}
\newtheorem{lemma}[theorem]{Lemma}
\newtheorem{prop}[theorem]{Proposition}
\theoremstyle{remark}
\theoremstyle{definition}
\newtheorem{remark}[theorem]{Remark}
\newtheorem{definition}[theorem]{Definition}
\newtheorem{example}[theorem]{Example}

\newcommand{\R}{\mathbb{R}}

\renewcommand*{\span}{\ensuremath{\mathrm{span\,}}}

\begin{document}

\title{Characterization of Polynomials as solutions of certain functional equations}
\author{J.~M.~Almira}

\subjclass[2010]{Primary 43B45, 39A70; Secondary 39B52.}

\keywords{Levi-Civita Functional equation, Polynomials and Exponential Polynomials on abelian Groups, Linear Functional Equations, Montel's theorem, Fr\'{e}chet's Theorem, Characterization Problems in Probaility Theory, Generalized Functions}

\address{Departamento de Matem\'{a}ticas, Universidad de Ja\'{e}n, E.P.S. Linares,  Campus Cient\'{\i}fico Tecnol\'{o}gico de Linares, 23700 Linares, Spain}
\email{jmalmira@ujaen.es}

\maketitle
\begin{abstract}
In \cite{AS_LCFE} the  functional equation
\[
\sum_{i=0}^mf_i(b_ix+c_iy)= \sum_{i=1}^na_i(y)v_i(x)
\]
with $x,y\in\mathbb{R}^d$ and $b_i,c_i\in\mathbf{GL}_d(\mathbb{C})$, was studied, both in the classical context of continuous complex valued functions and in the  framework of complex valued Schwartz distributions, where these equations were properly introduced in two different ways. The solution sets of these equations are, typically,  exponential polynomials and, in some particular cases, they reduce to ordinary polynomials.
In this paper we present several characterizations of ordinary polynomials as the solution sets of certain related functional equations. Some of  these equations are important because of their connection with the  Characterization Problem of distributions in Probability Theory.
\end{abstract}

\markboth{J.~M.~Almira}{Characterizations of Polynomials}

\section{Introduction}
The Levi-Civita functional equation, which  has the form
\begin{equation} \label{LC}
f(x+y)=\sum_{i=1}^na_i(y)v_i(x),
\end{equation}
where $f, a_k, v_k$ are complex valued functions defined on a semigroup $(\Gamma,+)$,  can be restated by claiming that 
$\tau_y(f)\in W$ for all $y\in \Gamma$, where $W=\mathbf{span}\{v_k\}_{k=1}^n$ is a finite dimensional space of functions defined on $\Gamma$ and $\tau_y(f)(x)=f(x+y)$. 

If $\Gamma=\mathbb{R}^d$ for some $d\geq 1$, the equation \eqref{LC} can be formulated also for distributions, since the translation operator 
$\tau_y$ can be extended, in a natural way, to the space $\mathcal{D}(\mathbb{R}^d)'$  of Schwartz complex valued distributions. Concretely, we can define 
\[
\tau_y(f)\{\phi\}=f\{\tau_{-y}(\phi)\}
\]
for all $y\in \mathbb{R}^d$ and all test function $\phi$. 
For these distributions we will also consider, in this paper, the dilation operator
\[
\sigma_b(f)\{\phi\} =\frac{1}{|\det(b)|}f\{\sigma_{b^{-1}}(\phi)\}, 
\] 
where  $b\in\mathbf{GL}_d(\mathbb{C})$  is any invertible matrix,   $\phi \in\mathcal{D}(\mathbb{R}^d)$   is any test function, and 
$\sigma_{b^{-1}}(\phi)(x)=\phi(b^{-1}x)$ for all $x\in\mathbb{R}^d$. 

If  $X_d$ denotes  either the set of continuous complex valued functions $C(\mathbb{R}^d)$ or the set of Schwartz complex valued distributions 
$\mathcal{D}(\mathbb{R}^d)'$, and  $f\in X_d$, then it is known that 
$\tau_y(f)\in W$ for all $y\in \Gamma$, where $W=\mathbf{span}\{v_k\}_{k=1}^n$ is a finite dimensional subspace of $X_d$, if and only if 
 $f$ is equal, in distributional sense, to a continuous exponential polynomial (also named quasi-polynomial). Indeed, if we set $M=\tau(f)=\mathbf{span}\{f(\cdot+y):y\in\mathbb{R}^d\}$, then $M\subseteq W$ is finite dimensional and translation invariant, so that  Anselone-Korevaar's Theorem \cite{anselone} implies that all its elements, including $f(x)$, are exponential polynomials. This was proved in 1913 by Levi-Civita \cite{LC}  for the case of ordinary continuous functions (see also \cite{leland}, \cite{Lo} for other proofs) Furthermore, if $\{w_k\}_{k=1}^N$ is a basis of the translation invariant space $M$, then every $f\in M$ satisfies the family of equations
\[
\tau_yf=\sum_{i=1}^Nb_i(y)w_i \text{  } (y\in\mathbb{R}^d).
\]
Thus, in the context of distributions, it makes sense to say that $f\in \mathcal{D}(\mathbb{R}^d)'$ satisfies the Levi-Civita functional equation if  there exist distributions $\{v_1,\cdots,v_m\}\subseteq \mathcal{D}(\mathbb{R}^d)'$ and ordinary functions $a_i:\mathbb{R}^d\to\mathbb{C}$ such that, for every $y\in\mathbb{R}^d$
\begin{equation}\label{LCdistributions1}
\tau_{y}(f)=\sum_{i=1}^ma_i(y)v_i.
\end{equation}
Indeed, we can assume that $\mathbf{span}\{v_1,\cdots,v_m\}$ is  translation invariant of dimension $m$. Then Anselone-Korevaar's theorem guarantees that $v_1,..,v_m$ and $f$ are all of them continuous exponential polynomials. Furthermore, once this is known, we can also demonstrate that $a_1,...,a_m$ are also continuous exponential polynomials. This follows from the fact that the translation operator $\tau_y(f)(x)=f(x+y)$ is continuous, which implies that $a_1,\cdots,a_m$ are continuous functions and then a symmetry argument (interchange $x$ and $y$) will show that they are, indeed, continuous exponential polynomials.

Note that when we say that two distributions are equal, this equality is in distributional sense. Hence, if a distribution $u$ is equal to a continuous function $f$, this means that the distribution is an ordinary function  and it is equal almost everywhere, with respect to Lebesgue measure, to $f$. Thus, when we claim that a distribution is a continuous  exponential polynomial, we just state equality almost everywhere in Lebesgue sense.

If $W$ is a vector subspace of $X_d$ and $f,\tau_y(f)\in W$, then $\Delta_yf=\tau_y(f)-f \in W$. Thus, for the additive group 
$\Gamma=\mathbb{R}^d$, Levi-Civita functional equation also takes the form
\begin{equation} \label{LC1}
\Delta_yf(x)\in W \ \ (y\in\mathbb{R}^d),
\end{equation}
with $W$  a finite dimensional subspace of $X_d$.

In \cite{AS_LCFE} the  functional equation
\[
\sum_{i=0}^mf_i(b_ix+c_iy)= \sum_{i=1}^na_i(y)v_i(x)
\]
with $x,y\in\mathbb{R}^d$ and $b_i,c_i\in\mathbf{GL}_d(\mathbb{C})$, was studied, both in the classical context of continuous complex valued functions and in the  framework of complex valued Schwartz distributions. Concretely, the following results were demonstrated (see \cite[Theorems 3, 6, 9 and Corollary 9]{AS_LCFE}):

\begin{theorem} \label{principal}
Assume that $\{f_k\}_{k=1}^{m}\subset X_d$ and, for all $y\in \mathbb{R}^d$,
\begin{equation} \label{delCP1} 
\sum_{i=1}^m\tau_{c_iy}(f_i) \in W \text{ for all } y\in\mathbb{R}^d
\end{equation}
for an $n$-dimensional subspace $W$ of $\mathcal{D}(\mathbb{R}^d)'$. If all matrices $c_i$ and $c_i-c_j$ (for $i\neq j$) are invertible,
then all $f_k$ are continuous exponential polynomials.
\end{theorem}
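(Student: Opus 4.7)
The plan is to prove the theorem by induction on $m$. The base case $m=1$ is immediate: since $c_1$ is invertible, the hypothesis $\tau_{c_1 y}(f_1)\in W$ for all $y\in\mathbb{R}^d$ gives $\tau_z(f_1)\in W$ for every $z\in\mathbb{R}^d$, so $\mathbf{span}\{\tau_z(f_1):z\in\mathbb{R}^d\}$ is a translation-invariant subspace of the finite-dimensional $W$, and Anselone--Korevaar's theorem shows that $f_1$ is a continuous exponential polynomial.

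For the inductive step ($m\geq 2$) the key maneuver is to eliminate the $f_m$ term by a combined substitution. Replacing $y$ by $y+h$ in the hypothesis, applying $\tau_{-c_m h}$ (translation in $x$) to the resulting relation, and subtracting the original identity makes the $f_m$ term cancel, leaving
\[
\sum_{i=1}^{m-1}\tau_{c_i y}\bigl(\Delta_{(c_i-c_m)h}f_i\bigr)\in W_h:=\tau_{-c_m h}(W)+W
\]
for every $y\in\mathbb{R}^d$, where $\dim W_h\leq 2n$. The matrices $c_1,\ldots,c_{m-1}$ (and their pairwise differences) remain invertible, so the inductive hypothesis applied to the $m-1$ distributions $\Delta_{(c_i-c_m)h}f_i$ yields that each of them is a continuous exponential polynomial. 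Since $c_i-c_m$ is invertible for $i<m$, letting $h$ vary shows that $\Delta_a f_i$ is a continuous exponential polynomial for every $a\in\mathbb{R}^d$ and every $i<m$; running the same reduction with any other index in the role of $m$ covers $i=m$ as well.

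It remains to lift the conclusion from ``$\Delta_a f_i$ is a continuous exponential polynomial for every $a$'' to ``$f_i$ is a continuous exponential polynomial''. I would iterate the elimination $m-1$ times so as to reach the base case directly, thereby obtaining that $\Delta_{a_1}\cdots\Delta_{a_{m-1}}f_1$ is a continuous exponential polynomial for every tuple $(a_1,\ldots,a_{m-1})\in(\mathbb{R}^d)^{m-1}$; a Fr\'{e}chet-type lifting for exponential polynomials (iterated differences that lie in a common finite-dimensional translation-invariant subspace force $f_1$ to lie in such a subspace as well) together with a final application of Anselone--Korevaar's theorem then closes the argument.

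The hard part is precisely this last lifting step. Because $W_h$ depends on the chosen shift $h$, after $m-1$ iterations the ambient space $W_{h_1,\ldots,h_{m-1}}$ has dimension bounded by $2^{m-1}n$ but drifts with the parameters $h_j$; extracting from this moving family a single finite-dimensional translation-invariant subspace in which all iterated differences lie uniformly in the parameters is the crux of the argument. This is exactly where one exploits that every $c_i-c_j$ is invertible: it allows the reductions to be performed in any order and with any directions, giving the rigidity needed to pin down the desired ambient subspace.
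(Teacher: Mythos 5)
Your reduction is correct and is essentially the paper's own: the base case $m=1$ via Anselone--Korevaar, and for $m\geq 2$ the combined substitution ($y\mapsto y+h$, a translation in $x$, subtraction) which turns the hypothesis into $\sum_{i=1}^{m-1}\tau_{c_iy}\bigl(\Delta_{(c_i-c_m)h}f_i\bigr)\in \tau_{-c_mh}(W)+W$, so that the induction hypothesis (the matrices $c_i$ and $c_i-c_j$ for $i,j<m$ remain invertible) yields that $\Delta_a f_i$ is a continuous exponential polynomial for every $a\in\mathbb{R}^d$; covering the last index by permuting which summand is eliminated is a harmless variant of the paper's device, which instead absorbs $\sum_{i\geq 2}\tau_{c_iy}(f_i)$ into a fixed finite-dimensional space and reapplies the case $m=1$ to $f_1$.

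The genuine gap is the lifting step you yourself call ``the crux'' but never prove: passing from ``$\Delta_a f_i$ is a continuous exponential polynomial for every $a$'' to ``$f_i$ is a continuous exponential polynomial''. This is not a routine Fr\'{e}chet-type argument, because the exponential polynomial $\Delta_a f_i$ changes with $a$ and no common finite-dimensional translation-invariant space containing all of them together with $f_i$ is given in advance. In the paper this step is a separate result (the corollary of a generalized Anselone--Korevaar theorem), proved through an algebraic invariant-hull construction: one fixes finitely many increments $h_1,\dots,h_t$ generating a dense subgroup of $\mathbb{R}^d$, takes $H$ to be the finite-dimensional translation-invariant hull of the finitely many exponential polynomials $\Delta_{h_k}f_i$, and shows (via the two lemmas on $\diamond_{L_1,\dots,L_t}(V)$) that the smallest subspace containing $\mathbf{span}\{f_i\}+H$ and invariant under each $\Delta_{h_k}$ is finite dimensional; density of the generated subgroup makes it translation invariant, and Anselone--Korevaar concludes. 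Note also that your closing remark misattributes where the hypothesis $\det(c_i-c_j)\neq 0$ enters: it only guarantees that the difference directions $(c_i-c_m)h$ sweep out all of $\mathbb{R}^d$ (in fact finitely many well-chosen directions suffice), whereas the uniformity problem you describe --- the drift of the ambient spaces $W_{h_1,\dots,h_{m-1}}$ with the parameters --- is resolved by the invariant-hull construction just described, which your proposal does not supply.
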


\begin{theorem} \label{principal2}
Assume that
\begin{equation} \label{delCP2}  
\sum_{i=1}^mf_i(b_ix+c_iy)= \sum_{k=1}^nu_k(y)v_k(x),
\end{equation}
where $f_i, u_k, v_k\in\mathcal{D}(\mathbb{R}^d)'$ and $b_i,c_i\in {GL}(d,\mathbb{R})$. 
If all matrices $b_i^{-1}c_i-b_j^{-1}c_j$ (for $i\neq j$) are invertible, then $f_k$ is 
a continuous exponential polynomial for $k=1,\ldots,m$.
\end{theorem}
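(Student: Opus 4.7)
The plan is to reduce Theorem~\ref{principal2} to Theorem~\ref{principal} by absorbing each dilation $b_i$ into the unknown $f_i$ via the dilation operator $\sigma_{b_i}$ introduced above. Once this is done, the conclusion will follow without further effort because continuous exponential polynomials are closed under invertible linear substitutions.

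First I would set $g_i := \sigma_{b_i}(f_i) \in \mathcal{D}(\mathbb{R}^d)'$, corresponding in classical notation to $g_i(x) = f_i(b_i x)$. The pointwise identity
\[
f_i(b_i x + c_i y) = f_i\!\left(b_i(x + b_i^{-1} c_i y)\right) = g_i(x + b_i^{-1} c_i y),
\]
once lifted to distributions following the conventions of \cite{AS_LCFE}, transforms \eqref{delCP2} into the Levi-Civita form
\[
\sum_{i=1}^m \tau_{\tilde{c}_i y}(g_i) \in W \quad \text{for all } y \in \mathbb{R}^d,
\]
where $\tilde{c}_i := b_i^{-1} c_i$ and $W := \mathbf{span}\{v_1, \ldots, v_n\}$. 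After extracting a basis if necessary, I may assume $W$ is $n$-dimensional, matching the hypothesis of Theorem~\ref{principal}.

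Next I would verify the matrix hypotheses: since $b_i, c_i \in \mathbf{GL}_d(\mathbb{R})$, each $\tilde{c}_i = b_i^{-1} c_i$ is automatically invertible, and the assumed invertibility of $b_i^{-1} c_i - b_j^{-1} c_j$ for $i \neq j$ is exactly $\tilde{c}_i - \tilde{c}_j \in \mathbf{GL}_d(\mathbb{R})$. Thus Theorem~\ref{principal} applies to $(g_1,\dots,g_m)$ and yields that each $g_i$ is a continuous exponential polynomial. Recovering $f_i$ via $f_i = \sigma_{b_i^{-1}}(g_i)$, that is $f_i(x) = g_i(b_i^{-1} x)$, and using that a substitution $x \mapsto A x$ sends $\sum p_k(x) e^{\lambda_k \cdot x}$ into $\sum p_k(Ax) e^{(A^{T}\lambda_k) \cdot x}$, still an exponential polynomial, shows that each $f_i$ is itself a continuous exponential polynomial.

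The main obstacle I anticipate is not conceptual but bookkeeping: one must verify rigorously that the pointwise identity $f_i(b_i x + c_i y) = g_i(x + \tilde{c}_i y)$ lifts to an equality of distributions on $\mathbb{R}^{2d}$, which amounts to checking that the pullback of $f_i$ by the linear map $(x,y) \mapsto b_i x + c_i y$ agrees with the pullback of $g_i = \sigma_{b_i}(f_i)$ by $(x,y) \mapsto x + \tilde{c}_i y$. Since every matrix involved is invertible, this is a routine consequence of the change-of-variable rule for distributional compositions, and once established the remainder of the proof is an immediate invocation of Theorem~\ref{principal}.
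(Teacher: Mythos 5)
Your reduction breaks at the step where you claim the equation ``transforms \eqref{delCP2} into the Levi-Civita form $\sum_{i=1}^m \tau_{\tilde c_i y}(g_i)\in W$ for all $y\in\mathbb{R}^d$.'' Theorem \ref{principal} and Theorem \ref{principal2} are not two phrasings of the same hypothesis: in Theorem \ref{principal} the equation is a \emph{family} of identities in $\mathcal{D}(\mathbb{R}^d)'$, one for each fixed parameter $y$, with the coefficients $a_i(y)$ ordinary (possibly irregular) functions of $y$; in Theorem \ref{principal2} there is a \emph{single} identity in $\mathcal{D}(\mathbb{R}^d\times\mathbb{R}^d)'$ in which the $u_k$ are themselves distributions in the variable $y$. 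A distribution on $\mathbb{R}^d\times\mathbb{R}^d$ cannot in general be restricted to a slice $y=y_0$, and the $u_k$ need not be functions, so from \eqref{delCP2} you cannot extract, for each fixed $y$, a membership statement $\sum_i \tau_{\tilde c_i y}(g_i)\in W=\mathbf{span}\{v_1,\dots,v_n\}$ in $\mathcal{D}(\mathbb{R}^d)'$. The paper is explicit that these two interpretations of the equation are genuinely different, and accordingly it does not derive the two-variable theorem from the parametrized one.

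The part of your argument that does match the paper is the dilation bookkeeping: setting $\widetilde{f_i}=\sigma_{b_i}(f_i)$ so that $\widetilde{f_i}(x+b_i^{-1}c_iy)=f_i(b_ix+c_iy)$, checking that $b_i^{-1}c_i$ and $b_i^{-1}c_i-b_j^{-1}c_j$ are invertible, and recovering $f_i$ from $\widetilde{f_i}$ by an invertible substitution is exactly how the general case is reduced to the case $b_i=I_d$. What is missing is a proof of that $b_i=I_d$ case as a statement in $\mathcal{D}(\mathbb{R}^d\times\mathbb{R}^d)'$. The paper proves it by induction on $m$: one applies the two-variable difference operator $\Delta_{(h,-c_1^{-1}h)}$ to both sides, uses the identity $\Delta_{(h,k)}\bigl(f(x+cy)\bigr)=\bigl(\Delta_{h+ck}f\bigr)(x+cy)$ (Lemma \ref{operador}) to kill the first summand and turn the others into $\bigl(\Delta_{(I_d-c_ic_1^{-1})h}f_i\bigr)(x+c_iy)$, observes that the right-hand side remains of tensor-product form with at most $2n$ terms, invokes the induction hypothesis (the base case $m=1$ being the known distributional Levi-Civita equation), and then removes the differences $\Delta_{(I_d-c_ic_1^{-1})h}$ via a Montel-type result (Corollary \ref{EP}, using that these matrices are invertible so the increments span a dense subgroup). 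Either supply an argument of this kind for the two-variable equation, or add hypotheses (e.g.\ that the $u_k$ are continuous functions) under which slicing in $y$ is legitimate; as written, the appeal to Theorem \ref{principal} is not justified.
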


\begin{theorem} \label{GO-t}
Assume that $f_i, a_{\alpha}, b_{\beta}\in  \mathcal{D}(\mathbb{R}^d)'$ for $1\leq i\leq m$, $0\leq |\alpha|\leq r$ and $0\leq |\beta|\leq s$, and equation
 \begin{equation}\label{G-O}
\sum_{i=1}^mf_i(x+c_iy)=A(x,y)+B(y,x),
\end{equation}
 is satisfied with  $A(x,y)=\sum_{|\alpha|\leq r}x^{\alpha} \cdot a_{\alpha}(y)$ and   $B(y,x)=\sum_{|\beta|\leq s}b_{\beta}(x)\cdot y^{\beta}$. Assume, furthermore, that all matrices $c_i$ (for all $i$) and $c_i-c_j$ (for $i\neq j$) are invertible. Then all $f_i$ are (in the distributional sense) ordinary polynomials.
\end{theorem}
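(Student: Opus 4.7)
The plan is to first invoke Theorem \ref{principal} to conclude that each $f_i$ is a continuous exponential polynomial, and then to use a frequency-separation argument applied to sufficiently high-order partial derivatives of \eqref{G-O} to rule out any nonzero exponent.

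First I would set $W\subset\mathcal{D}(\mathbb{R}^d)'$ to be the finite-dimensional subspace spanned by the distributions $\{x^\alpha : |\alpha|\le r\}\cup \{b_\beta : |\beta|\le s\}$. For each $y\in\mathbb{R}^d$, the identity \eqref{G-O} reads $\sum_{i=1}^m \tau_{c_iy}(f_i) = \sum_\alpha a_\alpha(y)\, x^\alpha + \sum_\beta y^\beta\, b_\beta \in W$, so the hypotheses of Theorem \ref{principal} are met and each $f_i$ is (equal a.e.\ to) a continuous exponential polynomial. I write $f_i(u)=\sum_{\lambda\in\Lambda_i} p_{i,\lambda}(u)\,e^{\lambda\cdot u}$ with $\Lambda_i\subset\mathbb{C}^d$ finite and $p_{i,\lambda}$ nonzero polynomials; it remains to prove $\Lambda_i\subseteq\{0\}$ for every $i$.

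Next, I would apply $\partial_x^\gamma\partial_y^\delta$ with $|\gamma|>r$ and $|\delta|>s$ to both sides of \eqref{G-O}. The right-hand side is annihilated (since $A$ has $x$-degree at most $r$ and $B$ has $y$-degree at most $s$), while the chain rule gives $\partial_x^\gamma\partial_y^\delta[f_i(x+c_iy)] = (\partial^\gamma(c_i^{T}\nabla)^\delta f_i)(x+c_iy)$. Expanding using the conjugation identity $e^{-\lambda\cdot u}\partial_k(e^{\lambda\cdot u}\,\cdot\,)=\partial_k+\lambda_k$, the equation becomes
\[
\sum_i \sum_{\lambda\in\Lambda_i}\bigl[(\partial+\lambda)^\gamma\bigl(c_i^{T}(\partial+\lambda)\bigr)^\delta p_{i,\lambda}\bigr](x+c_iy)\,e^{\lambda\cdot x + (c_i^{T}\lambda)\cdot y}=0.
\]
The frequency vectors $(\lambda,c_i^{T}\lambda)\in\mathbb{C}^{2d}$ are pairwise distinct whenever $\lambda\neq 0$, for if $(c_i-c_j)^{T}\lambda=0$ then invertibility of $c_i-c_j$ forces $\lambda=0$. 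By the classical linear independence of exponential polynomials with distinct frequencies on $\mathbb{R}^{2d}$, each coefficient polynomial must vanish; since $c_i$ is invertible, the argument $u=x+c_iy$ ranges over all of $\mathbb{R}^d$ as $(x,y)$ does, so for every nonzero $\lambda\in\Lambda_i$ I obtain the polynomial identity $(\partial+\lambda)^\gamma(c_i^{T}(\partial+\lambda))^\delta p_{i,\lambda}\equiv 0$ on $\mathbb{R}^d$.

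The last step is to choose $\gamma,\delta$ making the operator injective on polynomials. Since $\lambda\neq 0$, some coordinate $\lambda_k$ is nonzero; set $\gamma=(r+1)e_k$. Similarly $c_i^{T}\lambda\neq 0$, so some $(c_i^{T}\lambda)_j\neq 0$; set $\delta=(s+1)e_j$, where $e_k,e_j$ denote standard basis multi-indices. Then the operator factorizes as $(\partial_k+\lambda_k)^{r+1}(\partial_v+(c_i^{T}\lambda)_j)^{s+1}$, where $v$ is the $j$-th column of $c_i$. Each factor $(\partial_w+\mu)^n$ with $\mu\neq 0$ has trivial kernel on polynomials—after a linear change of variables aligning $w$ with a coordinate axis $\partial_t$, one has $(\partial_w+\mu)^n p=e^{-\mu t}\partial_t^n(e^{\mu t}p)$, and $e^{\mu t}p$ cannot be polynomial in $t$ of bounded degree unless $p\equiv 0$—so the composition is injective and $p_{i,\lambda}=0$, contradicting $\lambda\in\Lambda_i$. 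Hence $\Lambda_i\subseteq\{0\}$, and each $f_i=p_{i,0}$ is an ordinary polynomial. The main technical step is the frequency-separation after differentiation, which I expect to be the only nontrivial piece, but it reduces to the classical linear independence of smooth exponential polynomials with distinct frequencies.
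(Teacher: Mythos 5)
Your argument is essentially correct, but one citation needs fixing: since the $a_\alpha$ are distributions in $y$ and \eqref{G-O} is a single identity in $\mathcal{D}(\mathbb{R}^d\times\mathbb{R}^d)'$, you cannot literally ``fix $y$'' and read the equation as membership of $\sum_i\tau_{c_iy}(f_i)$ in $W$ for each $y$, which is what Theorem \ref{principal} requires. The repair is immediate: \eqref{G-O} is exactly of the form \eqref{delCP2} with $b_i=I_d$, $v_k(x)\in\{x^\alpha\}\cup\{b_\beta(x)\}$ and $u_k(y)\in\{a_\alpha(y)\}\cup\{y^\beta\}$, so Theorem \ref{principal2} applies verbatim (its hypothesis $b_i^{-1}c_i-b_j^{-1}c_j$ invertible becomes $c_i-c_j$ invertible) and yields that every $f_i$ is a continuous exponential polynomial. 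This is in fact how the paper's own proof begins in its $m=1$ lemma.

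From that point on your route genuinely diverges from the paper's, and it is sound. The paper proceeds by induction on $m$: it applies mixed difference operators $\Delta_{(h,\pm c_1^{-1}h)}$ to strip off summands one at a time, treats the single-summand case by killing the right-hand side with finite differences $\Delta_{(h_1,0)}^{r+1}\Delta_{(0,h_2)}^{s+1}$, separates frequencies via Sz\'{e}kelyhidi's lemma, and then rules out nonzero exponents through an analysis of the minimal polynomial of the translation operator on the finite-dimensional translation-invariant space generated by $p_k$; finally it must invoke the distributional Fr\'{e}chet theorem to pass from ``iterated differences of $f_i$ are polynomials'' back to ``$f_i$ is a polynomial.'' You instead treat all $m$ summands simultaneously: you annihilate the right-hand side with partial derivatives $\partial_x^\gamma\partial_y^\delta$, $|\gamma|>r$, $|\delta|>s$ (legitimate, since $\partial^\gamma x^\alpha=0$ once $|\gamma|>|\alpha|$, and after the first step the left-hand side is smooth), and you exploit the observation that the frequencies $(\lambda,c_i^{T}\lambda)\in\mathbb{C}^{2d}$ attached to distinct summands coincide only when $(c_i-c_j)^{T}\lambda=0$, i.e.\ only for $\lambda=0$; linear independence of exponential monomials with distinct frequencies then isolates each nonzero-frequency coefficient, and your choice $\gamma=(r+1)e_k$, $\delta=(s+1)e_j$ produces an operator $(\partial_k+\lambda_k)^{r+1}(\partial_v+(c_i^{T}\lambda)_j)^{s+1}$ with nonzero constant term, hence injective on polynomials, forcing $p_{i,\lambda}=0$. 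This buys you a shorter proof with no induction on $m$ and no appeal to the Fr\'{e}chet-type theorem for distributions; the price is that you use the full strength of Theorem \ref{principal2} (exponential-polynomial form of every $f_i$ at once), whereas the paper's difference-operator scheme only needs the $m=1$ representation and stays entirely within the finite-difference calculus that is native to the distributional setting.
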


Thus, the solution sets of these equations are, typically,  exponential polynomials and, in some particular cases, they reduce to ordinary polynomials. 

In this paper we present several characterizations of ordinary polynomials as the solution sets of certain functional equations. Some of  these equations are important because of their connection with the  characterization problem of distributions in Probability Theory, by using the characteristic functions of random variables and random vectors. In particular,  the equation \eqref{delCP2} , when restricted to ordinary functions $f_i$, $A$ and $B$,  has been recognized as an useful tool in the  characterization problem for Gaussian distributions (see, for example, \cite[Chapter 7]{MaPe}). 

\begin{remark}
Note that the equations considered in Theorems \ref{principal2} and \ref{GO-t} are considered as a single equation in $\mathcal{D}(\mathbb{R}^d\times \mathbb{R}^d)'$. To do this, it is necessary to fix the notation we are using. Concretely, if $f_i, a_k, v_k\in\mathcal{D}(\mathbb{R}^d)'$, and $b_i, c_i$ are nvertible matrices,  the distributions $f_i(b_ix+c_iy), a_k(y)v_k(x)\in \mathcal{D}(\mathbb{R}^d\times \mathbb{R}^d)'$ are defined by
\[
f_i(b_ix+c_iy)\{\varphi_1(x)\varphi_2(y)\}=f_{i}\left\{\frac{1}{|\det(b_i)||\det(c_i)|}\sigma_{b_i^{-1}}(\varphi_1)\ast \sigma_{c_i^{-1}}(\varphi_2)\right\}
\]   
and
\[
a_k(y)v_k(x)\{\varphi_1(x)\varphi_2(y)\}= a_k(y)\{\varphi_2(y)\} v_k(x)\{\varphi_1(x)\}
\]
for arbitrary $\varphi_1,\varphi_2\in \mathcal{D}(\mathbb{R}^d)$,  respectively. Indeed, the vector subspace of $\mathcal{D}(\mathbb{R}^d\times \mathbb{R}^d)$ spanned by the functions of the form  $\varphi_1(x)\varphi_2(y)$, with $\varphi_1,\varphi_2\in \mathcal{D}(\mathbb{R}^d)$ is dense in $\mathcal{D}(\mathbb{R}^d\times \mathbb{R}^d)$. Hence, any distribution $F\in \mathcal{D}(\mathbb{R}^d\times \mathbb{R}^d)'$ is completely determined from its values on these products \cite[p. 51]{V}. 
\end{remark}

\begin{remark}
Note that, if the matrices $\{b_1,\cdots,b_m,c_1,\cdots,c_m\}$ are pairwise commuting, then the statements below are equivalent:
\begin{itemize}
\item  $b_i^{-1}c_i-b_j^{-1}c_j$ is invertible whenever $i\neq j$.
\item $b_ic_j-b_jc_i$ is invertible whenever $i\neq j$
\end{itemize}
This is so because, if $b_i^{-1}c_i-b_j^{-1}c_j$ is invertible, then $b_jb_i(b_i^{-1}c_i-b_j^{-1}c_j)= b_jc_i-b_ic_j$ is also invertible,  and, for the reverse implication we can multiply $b_jc_i-b_ic_j$ by $b_i^{-1}b_j^{-1}$ to get $b_i^{-1}b_j^{-1}( b_jc_i-b_ic_j) = b_i^{-1}c_i-b_j^{-1}c_j$.
\end{remark}

\begin{remark} If $f,g\in \mathcal{S}'(\mathbb{R}^d)$ are tempered distributions and $b,c\in\mathbf{Gl}_d(\mathbb{R})$ are invertible matrices, then  it is easy to prove that $f(bx+cy), f(x)g(y)\in \mathcal{S}'(\mathbb{R}^d\times \mathbb{R}^d)$, and $\tau_{ch}(f) \in  \mathcal{S}'(\mathbb{R}^d)$ for all $h\in\mathbb{R}^d$. Hence, the equations \eqref{delCP1}  and  \eqref{delCP2}  can be studied for tempered distributions. Now, $\mathcal{S}'(\mathbb{R}^s)\subset \mathcal{D}(\mathbb{R}^s)'$ for all $s\in\mathbb{N}$, so that Theorems  \ref{principal} and  \ref{principal2} and the fact that all continuous exponential polynomials are tempered distributions, imply that all results in this section can be  applied also under the additional restriction that all  distributions under consideration are tempered distributions, and  the same holds for all the equations studied along this paper. 
\end{remark}

\section{Polynomials as solution sets of certain Functional equations connected to Probability Theory}
As Theorem \ref{GO-t} shows, there are some particular cases of equation \eqref{delCP2}  which characterize  continuous polynomials in $\mathbb{R}^d$. 
In this section we present a new proof of Theorem \ref{GO-t} which, in contrast to the one included in \cite{AS_LCFE}, is not based on the very technical tools used in \cite{Gu_O}. We also present a new proof of a classical result by Ghurye and Olkin about  characterizations of Gaussian distributions (see Theorem \ref{GOT} below), under certain additional restrictions. 

\subsection{New proof of Theorem \ref{GO-t}}

Let us first study the case $m=1$ of the equation:

\begin{lemma}\label{case1}
Assume that  $c\in\mathbf{Gl}_d(\mathbb{R})$ and 
\[
f(x+cy)=\sum_{|\alpha|\leq r}x^{\alpha} \cdot a_{\alpha}(y)+\sum_{|\beta|\leq s}b_{\beta}(x)\cdot y^{\beta}
\]
for some complex valued distributions $f$, $a_{\alpha},b_{\beta}\in\mathcal{D}(\mathbb{R}^d)'$. Then $f$ is a polynomial. 
\end{lemma}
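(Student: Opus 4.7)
The plan is to differentiate the equation first in $x$ to kill the sum $\sum x^{\alpha}a_{\alpha}(y)$, then in $y$ to kill the sum $\sum b_{\beta}(x)y^{\beta}$, and finally to conclude that a sufficiently high derivative of $f$ vanishes. Before starting, the $c$-dependence is easily removed: under the linear substitution $\tilde y = cy$ each monomial $y^{\beta}$ becomes a polynomial in $\tilde y$ of degree at most $|\beta|\leq s$, and $a_{\alpha}(y)=a_{\alpha}(c^{-1}\tilde y)$ is still a distribution in $\tilde y$, so after renaming the distributional coefficients I may assume $c=I$ and work with
\[
f(x+y)=\sum_{|\alpha|\leq r}x^{\alpha}a_{\alpha}(y)+\sum_{|\beta|\leq s}b_{\beta}(x)y^{\beta},
\]
viewed as an identity in $\mathcal{D}(\mathbb{R}^d\times\mathbb{R}^d)'$.

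The main step is a two-fold differentiation. First I would apply $\partial_x^{\gamma}$ with $|\gamma|=r+1$ to both sides: all monomials $x^{\alpha}$ with $|\alpha|\leq r$ are annihilated, while the chain rule gives $\partial_x^{\gamma} f(x+y)=(\partial^{\gamma} f)(x+y)$, so
\[
(\partial^{\gamma} f)(x+y)=\sum_{|\beta|\leq s}(\partial^{\gamma} b_{\beta})(x)\,y^{\beta}.
\]
I would then apply $\partial_y^{\delta}$ with $|\delta|=s+1$; every $y^{\beta}$ with $|\beta|\leq s$ vanishes, and the left-hand side becomes $(\partial^{\gamma+\delta}f)(x+y)$, yielding
\[
(\partial^{\gamma+\delta}f)(x+y)=0 \quad\text{in } \mathcal{D}(\mathbb{R}^d\times\mathbb{R}^d)'.
\]

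Finally I would transfer this back to a statement about $f$ alone. The linear map $(x,y)\mapsto(u,v):=(x+y,y)$ is a diffeomorphism of $\mathbb{R}^{2d}$ with Jacobian determinant $1$, under which $(\partial^{\gamma+\delta}f)(x+y)$ corresponds to $(\partial^{\gamma+\delta}f)(u)\otimes 1_v$; its vanishing therefore forces $\partial^{\gamma+\delta}f=0$ in $\mathcal{D}(\mathbb{R}^d)'$. Running this for all admissible $\gamma,\delta$, every partial derivative of $f$ of order $r+s+2$ vanishes, and the classical fact that a distribution with all derivatives of a fixed order equal to zero is a polynomial (of degree at most $r+s+1$) completes the proof. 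The only genuinely delicate point is the last transfer: one should verify that any $\psi\in\mathcal{D}(\mathbb{R}^d)$ can be written as $\int\phi(u-v,v)\,dv$ for some $\phi\in\mathcal{D}(\mathbb{R}^d\times\mathbb{R}^d)$ (for instance $\phi(x,y)=\psi(x+y)\rho(y)$ with $\rho$ a bump of total integral one), which legitimates the passage from $(\partial^{\mu} f)(x+y)=0$ on $\mathbb{R}^{2d}$ to $\partial^{\mu}f=0$ on $\mathbb{R}^d$; everything else is routine chain-rule accounting.
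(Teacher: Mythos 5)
Your proof is correct, and it takes a genuinely different route from the paper's. The paper first invokes its Levi-Civita machinery (Theorem \ref{principal2}) to conclude that $f$ is a continuous exponential polynomial $\sum_k p_k(x)e^{\langle\lambda_k,x\rangle}$, and only then kills the right-hand side with the finite-difference operators $\Delta_{(h_1,0)}^{r+1}\Delta_{(0,h_2)}^{s+1}$; the remaining work is to force $\lambda_k=\mathbf{0}$, which requires Sz\'{e}kelyhidi's lemma on the independence of exponentials with polynomial coefficients together with an analysis of the minimal polynomial of $\tau_h$ on the translation-invariant space generated by $p_k$. You instead exploit the fact that the operators $\partial_x^{\gamma}$, $\partial_y^{\delta}$ (rather than differences) annihilate the tensor-product terms $x^{\alpha}\otimes a_{\alpha}(y)$ and $\partial^{\gamma}b_{\beta}(x)\otimes y^{\beta}$ outright, while commuting through the pullback $f(x+y)$ via the chain rule, so that $(\partial^{\gamma+\delta}f)(x+y)=0$; since every multi-index $\mu$ with $|\mu|=r+s+2$ splits as $\gamma+\delta$ with $|\gamma|=r+1$, $|\delta|=s+1$, you get $\partial^{\mu}f=0$ for all such $\mu$ and conclude with the classical fact that such a distribution is a polynomial of degree at most $r+s+1$. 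Your reduction to $c=I_d$ and your justification that $u(x+y)=0$ in $\mathcal{D}(\mathbb{R}^d\times\mathbb{R}^d)'$ forces $u=0$ (testing against $\psi(x+y)\rho(y)$, or equivalently noting that convolutions $\varphi_1\ast\varphi_2$ span a dense subspace) are both sound, and the latter fact is one the paper itself uses without proof. What your approach buys is a completely elementary, self-contained argument that avoids Anselone--Korevaar, the exponential-polynomial representation, and the distributional Fr\'{e}chet theorem, and it yields an explicit degree bound $\deg f\leq r+s+1$ not stated in the paper; what the paper's approach buys is uniformity with the difference-operator technique used throughout (e.g.\ in the proofs of Theorems \ref{GO-t} and \ref{FTI}), where differentiation would not be available as a shortcut.
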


\begin{proof}
Obviously, this equation is a particular case of  \eqref{delCP2}. Hence $f$ is an exponential polynomial, which means that  $f=\sum_{k=1}^{m} p_{k}(x)e^{\langle \lambda_{k},x\rangle}$ for certain $m$ and certain polynomials $p_{k}$ and complex vectors $\lambda_{k}\in\mathbb{C}^d$. Hence 
\begin{equation}
 \sum_{k=1}^{m} p_{k}(x+cy)e^{\langle \lambda_{k},x+cy\rangle}=\sum_{|\alpha|\leq r}x^{\alpha} \cdot c_{\alpha}(y)+\sum_{|\beta|\leq s}d_{\beta}(x)\cdot y^{\beta}=:\Phi(x,y)
\end{equation}

Given $h_1,h_2\in\mathbb{R}^d$, we have that  $\Delta_{(h_1,0)}^{r+1}\Delta_{(0,h_2)}^{s+1}\Phi(x,y)=0$. On the other hand, 
\begin{eqnarray*}
&\ & \Delta_{(h_1,0)}(p_{k}(x+cy)e^{\langle \lambda_{k},x+cy\rangle}) \\
&=& \ \ \ p_{k}(x+cy+h_1)e^{\langle \lambda_{k},x+cy+h_1\rangle}- p_{k}(x+cy)e^{\langle \lambda_{k},x+cy\rangle}\\
&=&\ \ \  \left(p_{k}(x+cy+h_1)e^{\langle \lambda_{k},h_1\rangle}- p_{k}(x+cy)\right)e^{\langle \lambda_{k},x+cy\rangle}\\
&=&\ \ \  \left(e^{\langle \lambda_{k},h_1\rangle}\tau_{h_1}- 1_d\right)(p_{k})(x+cy)e^{\langle \lambda_{k},x+cy\rangle}
\end{eqnarray*}
and 
 \begin{eqnarray*}
&\ & \Delta_{(0,h_2)}(p_{k}(x+cy)e^{\langle \lambda_{k},x+cy\rangle}) \\
&=& \ \ \ p_{k}(x+cy+ch_2)e^{\langle \lambda_{k},x+cy+ch_2\rangle}- p_{k}(x+cy)e^{\langle \lambda_{k},x+cy\rangle}\\
&=&\ \ \  \left(p_{k}(x+cy+ch_2)e^{\langle \lambda_{k},ch_2\rangle}- p_{k}(x+cy)\right)e^{\langle \lambda_{k},x+cy\rangle}\\
&=&\ \ \  \left(e^{\langle \lambda_{k},ch_2\rangle}\tau_{ch_2}- 1_d\right)(p_{k})(x+cy)e^{\langle \lambda_{k},x+cy\rangle}.
\end{eqnarray*}
Hence 
\begin{eqnarray*}
0 &=& \Delta_{(0,h_2)}^{r+1}\Delta_{(0,h_2)}^{s+1} \Phi(x,y) \\
&=& \sum_{k=1}^{m} \Delta_{(0,h_2)}^{r+1}\Delta_{(0,h_2)}^{s+1} 
\left(p_{k}(x+cy)e^{\langle \lambda_{k},x+cy\rangle}\right) \\
&=& \sum_{k=1}^{m}  
 \left(e^{\langle \lambda_{k},h_1\rangle}\tau_{h_1}- 1_d\right)^{s+1}\left(e^{\langle \lambda_{k},ch_2\rangle}\tau_{ch_2}- 1_d\right)^{r+1}(p_{k})(x+cy)e^{\langle \lambda_{k},x+cy\rangle}
\end{eqnarray*}
Hence
\[
\sum_{k=1}^{m}  
 \left(e^{\langle \lambda_{k},h_1\rangle}\tau_{h_1}- 1_d\right)^{s+1}\left(e^{\langle \lambda_{k},c h_2\rangle}\tau_{ch_2}- 1_d\right)^{r+1}(p_{k})(x)e^{\langle \lambda_{k},x\rangle}=0,
\]
since, for any distribution $u$ and any invertible matrix $c\in\mathbf{Gl}_d(\mathbb{R})$, we have that $u(x+cy)=0$ as an element of $\mathcal{D}(\mathbb{R}^d\times \mathbb{R}^d)'$ if and only if $u=0$ as an element of $ \mathcal{D}(\mathbb{R}^d)'$.  

We can assume that vectors $\lambda_{k}$, $k=1,\cdots,m$ are pairwise distinct and all $p_{k}$ are different from  $0$. 
Then   \cite[Lemma 4.3]{Sz1} (see also \cite[Theorem 5.10]{Sz}) implies that 
\[
 \left(e^{\langle \lambda_{k},h_1\rangle}\tau_{h_1}- 1_d\right)^{s+1}\left(e^{\langle \lambda_{k},c_1h_2\rangle}\tau_{ch_2}- 1_d\right)^{r+1}(p_{k})(x)=0,\ \ k=1,\cdots,m.
\]
for all $h_1,h_2\in\mathbb{R}^d$. In particular, taking $h_2=c^{-1}h_1$, we have that 
\[
 \left(e^{\langle \lambda_{k},h_1\rangle}\tau_{h_1}- 1_d\right)^{s+r+2}(p_{k})(x)=0,\ \ k=1,\cdots,m,
\]
for all $h_1\in\mathbb{R}^d$. Consider the polynomial $p_k$ and the variety it generates: $V_k=\tau(p_k)=\mathbf{span}\{p_k(x+\alpha):\alpha\in\mathbb{R}^d\}$, which is a translation invariant space of finite dimension. Then for any operator $L:V_k \to V_k$ which commutes with translations  we have that  $L=0$ if and only if $L(p_k)=0$.  Moreover,  $\tau_h:V_k\to V_k$ defines an automorphism for every $h\in\mathbb{R}^d$, and there exists an integral number $M>0$ such that  $\Delta_h^Mp_k=0$ for all $h\in\mathbb{R}^d$, since $p_k$ is a polynomial. But $\Delta_h^M=(\tau_h-1_d)^M$ implies that $(z-1)^M$ is a multiple of the minimal polynomial associated to the operator $\tau_h:V_k\to V_k$. This implies that the only eigenvalue of $\tau_h$ is $1$ and the minimal polynomial of $\tau_h$ is of the form $(z-1)^{m(h)}$ for some $m(h)>1$, for all $h\neq 0$ (since $\tau_h$  is different from the identity).  Hence, $\left(e^{\langle \lambda_{k},h_1\rangle}z- 1\right)^{s+r+2}$ must be a multiple of $(z-1)$ for every $h_1$. In particular, $e^{\langle \lambda_{k},h_1\rangle}=1$, for all $h_1$, which implies that $\lambda_{k}=\mathbf{0}$. Hence $k=1$, $\lambda_{1}=\mathbf{0}$, and  $f$ is a polynomial.
\end{proof}

\begin{proof}[Proof of Theorem \ref{GO-t}]  Lemma above proves the case $m=1$. 
Assume $m>1$ and apply the operator $\Delta_{(h_1,c_1^{-1}h_1)}$ to both sides of the equation. This transforms it into a similar one, with $m-1$ summands,
\begin{equation}
\sum_{i=2}^{m} g_i(x+c_iy) = \sum_{|\alpha|\leq r}x^{\alpha} \cdot a_{\alpha}^*(y)+\sum_{|\beta|\leq s}b_{\beta}^*(x)\cdot y^{\beta}
\end{equation}
with $g_i= \Delta_{(I_d-c_ic_1^{-1})h_1}(f_i)\in\mathcal{D}(\mathbb{R}^d)'$ for $i=2,\cdots,m$.  Thus, we can apply the operator 
$\Delta_{(h_2,c_2^{-1}h_2)}$ to both sides of the new equation to reduce again by one the number of summands, 
\begin{equation}
\sum_{i=3}^{m}  \Delta_{(I_d-c_ic_2^{-1})h_2} \Delta_{(I_d-c_ic_1^{-1})h_1}(f_i)(x+c_iy) = \sum_{|\alpha|\leq r}x^{\alpha} \cdot a_{\alpha}^{**}(y)+\sum_{|\beta|\leq s}b_{\beta}^{**}(x)\cdot y^{\beta},
\end{equation}
and, iterating the process, we get an equation with just one summand in the left hand side member:
\begin{eqnarray*}
&\ & \Delta_{(I_d-c_mc_{m-1}^{-1})h_{m-1}} \cdots \Delta_{(I_d-c_mc_2^{-1})h_2} \Delta_{(I_d-c_mc_1^{-1})h_1}(f_m)(x+c_my) \\
&\ & \ \ = \sum_{|\alpha|\leq r}x^{\alpha} \cdot a_{\alpha}^{***}(y)+\sum_{|\beta|\leq s}b_{\beta}^{***}(x)\cdot y^{\beta}.
\end{eqnarray*}
Applying case $m=1$ we conclude that $$\Delta_{(I_d-c_mc_{m-1}^{-1})h_{m-1}} \cdots \Delta_{(I_d-c_mc_2^{-1})h_2} \Delta_{(I_d-c_mc_1^{-1})h_1}(f_m)$$ is a polynomial for all $h_1,\cdots,h_{m-1}\in\mathbb{R}^d$. In particular, $f_m$  satisfies the distributional version of Fr\'{e}chet's functional equation $\Delta_h^{N+1}f_m=0$, for all $h$ and for certain $N$. Hence $f_m$ is a polynomial (see  \cite{AA_AM},  \cite{A_NFAO}, \cite{AK_CJM} or  \cite{AS_AM}). A simple permutation in the summation process, before applying the very same arguments above, produces the very same result for $f_i$ for all $i$. 
\end{proof}

A similar result to Theorem \ref{GO-t} was already proved by Ghurye and Olkin for continuous functions. Indeed, for $d>1$, Theorem \ref{GO-t} generalizes to distributional setting, with a simpler proof, the particular case we get from  \cite[Lemma 3]{Gu_O} when we impose the additional condition $\det(c_i-c_j)\neq 0$ for $i\neq j$. For $d=1$, see  \cite[Lemma 4]{Gu_O}.

\subsection{New proof of Ghurye and Olkin Theorem} 

In this section we demonstrate the following result, which serves for a new proof of a well known result by Ghurye and Olkin:
\begin{theorem} \label{FTI}  Assume that   $P,Q,f_i\in \mathcal{D}(\mathbb{R}^d)'$, $i=1,\cdots,m$, and  $b_i,c_i\in \mathbf{Gl}_d(\mathbb{R})$ are such that  $b_i^{-1}c_i-b_j^{-1}c_j$ is invertible whenever $i\neq j$. If
\begin{equation}\label{Variation}
\sum_{i=1}^{m}f_i(b_ix+c_iy)= P(x)\cdot 1(y) +   1(x) \cdot  Q(y),
\end{equation} 
where  the equality is understood in the sense of $\mathcal{D}(\mathbb{R}^d\times \mathbb{R}^d)'$. Then $P(x)$ and   $Q(y)$ are, in distributional sense, continuous polynomials of degree $\leq m$. Furthermore, if $Q=0$ in \eqref{Variation} then the degree of $P$ is $m-1$ at most. Analogously, if $P=0$, in \eqref{Variation} then the degree of $Q$ is $m-1$ at most.
\end{theorem}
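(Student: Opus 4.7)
The plan is to reduce the equation to the single-term case by applying $m-1$ difference operators that kill the summands one at a time, in the spirit of the proof of Theorem~\ref{GO-t}, and then to exploit the separable structure $P(x)+Q(y)$ of the right-hand side via distributional partial derivatives.

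First I would apply, for $j=1,\ldots,m-1$, the operator $\Delta_{(h_j,-c_j^{-1}b_jh_j)}$ to the (updated) equation. The second coordinate is chosen so that $b_jh_j+c_j(-c_j^{-1}b_jh_j)=0$, which kills the $j$-th summand. For $i>j$, the term $f_i(b_ix+c_iy)$ turns into $(\Delta_{(b_i-c_ic_j^{-1}b_j)h_j}f_i)(b_ix+c_iy)$; the shift matrix $b_i-c_ic_j^{-1}b_j=c_i(c_i^{-1}b_i-c_j^{-1}b_j)$ is invertible because, writing $A_k=b_k^{-1}c_k$, one has $c_i^{-1}b_i-c_j^{-1}b_j=A_i^{-1}-A_j^{-1}=-A_i^{-1}(A_i-A_j)A_j^{-1}$, and $A_i-A_j$ is invertible by hypothesis. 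Crucially, the separable structure of the right-hand side is preserved since $\Delta_{(h,k)}(P(x)+Q(y))=(\Delta_hP)(x)+(\Delta_kQ)(y)$. After $m-1$ such steps one is left with
\[
g_m(b_mx+c_my)=\tilde{P}(x)+\tilde{Q}(y),
\]
where $g_m$ is an $(m-1)$-fold iterated difference of $f_m$, $\tilde{P}=\Delta_{h_1}\cdots\Delta_{h_{m-1}}P$, and $\tilde{Q}=\Delta_{-c_1^{-1}b_1h_1}\cdots\Delta_{-c_{m-1}^{-1}b_{m-1}h_{m-1}}Q$.

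For the single-term equation I would change variables $u=b_mx$, $v=c_my$ to get $g_m(u+v)=\hat{P}(u)+\hat{Q}(v)$ in $\mathcal{D}(\mathbb{R}^d\times\mathbb{R}^d)'$, and then apply the distributional partial derivatives $\partial_{u_j}$ followed by $\partial_{v_k}$. The left-hand side becomes $(\partial_j\partial_kg_m)(u+v)$, while the right-hand side vanishes since $\hat{P}$ does not depend on $v$ and $\hat{Q}$ does not depend on $u$. Because the map $(u,v)\mapsto u+v$ is a surjective submersion, its pullback on distributions is injective, so $\partial_j\partial_kg_m=0$ in $\mathcal{D}(\mathbb{R}^d)'$ for every $j,k$, and hence $g_m$ is affine linear. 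Substituting back into $g_m(u+v)=\hat{P}(u)+\hat{Q}(v)$ forces $\hat{P},\hat{Q}$, and consequently $\tilde{P},\tilde{Q}$, to be polynomials of degree at most $1$. Since $\Delta_h^2$ annihilates every such polynomial, we deduce $\Delta_h^2\Delta_{h_1}\cdots\Delta_{h_{m-1}}P=0$ for arbitrary $h,h_1,\ldots,h_{m-1}\in\mathbb{R}^d$; specialising $h_1=\cdots=h_{m-1}=h$ yields $\Delta_h^{m+1}P=0$, and Fr\'echet's theorem (as cited in the paper) gives that $P$ is a polynomial of degree at most $m$. The reasoning for $Q$ is identical.

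For the degree sharpening, assume $Q=0$. Then $\tilde{Q}=0$ throughout the reduction, so $g_m(b_mx+c_my)=\tilde{P}(x)$ is independent of $y$; applying $\partial_{y_k}$ and using the invertibility of $c_m$ forces $\nabla g_m=0$, so $g_m$ is constant and hence $\tilde{P}$ is constant, giving $\Delta_h^mP=0$ for every $h$ and therefore $P$ a polynomial of degree at most $m-1$. The case $P=0$ is symmetric. The step that will demand most care is the bookkeeping of the reduction, namely checking that the hypothesis $b_i^{-1}c_i-b_j^{-1}c_j\in\mathbf{Gl}_d(\mathbb{R})$ for $i\neq j$ suffices to guarantee the invertibility of all the shift matrices that appear along the way; beyond this, the remaining argument is essentially a distributional incarnation of Cauchy's functional equation combined with Fr\'echet's theorem.
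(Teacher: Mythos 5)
Your argument is correct, and its core --- killing the summands one at a time with the difference operators $\Delta_{(h_j,\,-c_j^{-1}b_jh_j)}$, which preserve the separable right-hand side, and finishing with the distributional Fr\'echet theorem --- is exactly the paper's reduction (the paper first normalizes $b_i=I_d$ via the dilations $\sigma_{b_i}$, so its increments read $(h_j,-c_j^{-1}h_j)$). Where you diverge is the endgame: the paper iterates the reduction $m$ times, so the left-hand side disappears entirely and the resulting identity $0=(\Delta_{h_m}\cdots\Delta_{h_1}P)(x)\cdot 1(y)+1(x)\cdot(\Delta_{-c_m^{-1}h_m}\cdots\Delta_{-c_1^{-1}h_1}Q)(y)$ immediately forces both iterated differences to be constants, after which one further difference $\Delta_{(h_{m+1},0)}$ (resp.\ $\Delta_{(0,h_{m+1})}$) yields an $(m{+}1)$-fold vanishing difference for $P$ (resp.\ $Q$); you instead stop after $m-1$ steps and solve the residual Pexider-type equation $g_m(u+v)=\hat P(u)+\hat Q(v)$ by distributional differentiation, i.e.\ $\partial_{u_j}\partial_{v_k}$ together with injectivity of the pullback under $(u,v)\mapsto u+v$ (a fact the paper itself invokes in the proof of its Lemma on the case $m=1$ of the other theorem), concluding that $\tilde P,\tilde Q$ are affine before applying $\Delta_h^{2}$. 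Both routes give $\Delta_h^{m+1}P=\Delta_h^{m+1}Q=0$, hence degree $\leq m$, and the same sharpening to degree $\leq m-1$ when $Q=0$ or $P=0$. Your invertibility bookkeeping, $b_i-c_ic_j^{-1}b_j=c_i\bigl(A_i^{-1}-A_j^{-1}\bigr)=-c_iA_i^{-1}(A_i-A_j)A_j^{-1}$ with $A_k=b_k^{-1}c_k$, is correct, though worth noting that this invertibility is not actually needed here: all that matters is that the chosen increment annihilates the $j$-th summand, and the paper's normalization $b_i=I_d$ sidesteps the bookkeeping altogether.
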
 
Indeed, if we impose $P(x)=\sum_{i=1}^m f_i(b_ix)$ and   $Q(y)=\sum_{i=1}^m f_i(c_iy)$, the equation \eqref{Variation} is transformed into 
\begin{equation}\label{Ski}
\sum_{i=1}^{m}f_i(b_ix+c_iy)= \sum_{i=1}^m f_i(b_ix)\cdot 1(y) +   1(x) \cdot  \sum_{i=1}^m f_i(c_iy),
\end{equation}
which is a distributional version of the equation you get taking logarithms at both sides of Skitovich-Darmois functional equation:
\begin{equation}\label{Ski-Dar}
\prod_{i=1}^m \widehat{\mu_i}(b_ix+c_iy) = \prod_{i=1}^m \widehat{\mu_i}(b_ix)  \prod_{i=1}^m  \widehat{\mu_i}(c_iy).
\end{equation}
Here, $ \widehat{\mu_i}$ represents the characteristic function of a probability distribution $\mu_i$. If $f_i=-\log  \widehat{\mu_i}$ for all $i$, then we get the equation \eqref{Ski} for ordinary functions. This equation is connected to the characterization problem of Gaussian distributions. Concretely, its study leads to a proof of the following result: 

\begin{theorem}[Ghurye-Olkin \cite{Gu_O}, \cite{KLR}] \label{GOT} Assume that $X_i$, $i=1,\cdots,m$ are independent $d$-dimensional random vectors such that the linear forms $L_1= b_1^tX_1+...+b_m^tX_m$ and $L_2= c_1^tX_1+...+c_m^tX_m$  are independent, with $b_i, c_i\in\mathbf{Gl}_d(\mathbb{R})$ for $i=1,\cdots,m$. Then $X_i$ is Gaussian for all $i$.
\end{theorem}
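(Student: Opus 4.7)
The plan is to translate the independence of $L_1$ and $L_2$ into the Skitovich–Darmois identity \eqref{Ski-Dar}, reduce to the distributional functional equation handled by Theorem \ref{FTI}, extract that each $-\log\widehat{\mu_i}$ is a polynomial, and conclude via Marcinkiewicz's classical theorem.

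First, from the independence of $L_1=\sum_i b_i^tX_i$ and $L_2=\sum_i c_i^tX_i$ I would factor the joint characteristic function of $(L_1,L_2)$ as the product of the marginals, and then use independence of the $X_i$ to rewrite both sides as products of the individual $\widehat{\mu_i}$, recovering exactly \eqref{Ski-Dar}. Since $\widehat{\mu_i}(0)=1$ and $\widehat{\mu_i}$ is continuous, there is a symmetric neighborhood $U$ of the origin on which every $\widehat{\mu_i}$ is nonvanishing; choosing the continuous branch with $f_i(0)=0$ and setting $f_i:=-\log\widehat{\mu_i}$, I would take logarithms in \eqref{Ski-Dar} over $U\times U$ to obtain the local version of \eqref{Ski},
\[
\sum_{i=1}^{m}f_i(b_ix+c_iy)=\Bigl(\sum_{i=1}^{m}f_i(b_ix)\Bigr)+\Bigl(\sum_{i=1}^{m}f_i(c_iy)\Bigr).
\]
Theorem \ref{FTI} applied to this equation identifies $P(x):=\sum_i f_i(b_ix)$ and $Q(y):=\sum_i f_i(c_iy)$ as polynomials of degree at most $m$.

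To isolate the individual $f_i$, I would mimic the cascading difference-operator reduction used in the proof of Theorem \ref{GO-t}: the operator $\Delta_{(h_j,-c_j^{-1}b_jh_j)}$ annihilates the $j$-th summand $f_j(b_jx+c_jy)$, since $b_jh_j+c_j(-c_j^{-1}b_jh_j)=0$, while the factorization $b_i-c_ic_j^{-1}b_j=c_i(c_i^{-1}b_i-c_j^{-1}b_j)$ together with the invertibility of $c_i^{-1}b_i-c_j^{-1}b_j$ (equivalent to that of $b_i^{-1}c_i-b_j^{-1}c_j$) guarantees that the argument seen by every other summand remains invertible in $h_j$. Iterating the reduction $m-1$ times and then invoking Lemma \ref{case1} on the surviving summand after the change of variables $u=b_ix$ shows that each $f_i$ is annihilated by a sufficiently high iterated difference operator, hence satisfies a distributional Fréchet equation, hence coincides on $U$ with an ordinary polynomial $p_i$.

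Finally, once $\widehat{\mu_i}=e^{-p_i}$ holds on $U$ with $p_i$ a polynomial, one uses \eqref{Ski-Dar} itself to propagate this identification from $U$ to all of $\mathbb{R}^d$: any $\widehat{\mu_i}(\xi)$ can be expressed in terms of products of values closer to the origin, and by iteration each $\widehat{\mu_i}$ extends globally as the exponential of a polynomial. Marcinkiewicz's theorem then forces $\deg p_i\le 2$, so every $\mu_i$ is Gaussian and the theorem follows. The principal difficulty is precisely this local-to-global passage: the difference-operator reduction only delivers a polynomial identity on the neighborhood $U$ where the logarithms are defined, and one has to use the functional equation to propagate it over all of $\mathbb{R}^d$ without assuming a priori that $\widehat{\mu_i}$ is entire.
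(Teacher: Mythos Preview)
Your proposal diverges from the paper's argument at two points, and the first divergence is what creates the gap you yourself flag at the end. The paper does not work locally: it invokes \cite[Lemma~1]{Gu_O} to conclude that, under the stated hypotheses, each $\widehat{\mu_i}$ is nonvanishing on all of $\mathbb{R}^d$, so logarithms can be taken globally and equation \eqref{Ski-ordinary} holds as an identity on $\mathbb{R}^d\times\mathbb{R}^d$. Theorem~\ref{FTI} then applies as stated. By contrast, you only secure nonvanishing on a neighbourhood $U$, and your invocation of Theorem~\ref{FTI} on $U\times U$ is not covered by that theorem (it is formulated for identities in $\mathcal{D}(\mathbb{R}^d\times\mathbb{R}^d)'$, and its proof uses iterated differences that shrink the domain). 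Your propagation sketch at the end is the real missing step, and the paper simply sidesteps it by importing the global nonvanishing lemma.

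The second divergence is a genuine difference in route rather than a gap. After Theorem~\ref{FTI} gives that $P(x)=\sum_i f_i(b_ix)$ is a polynomial, the paper stops there: the characteristic function of $L_1$ is $e^{P}$, Marcinkiewicz forces $\deg P\le 2$, so $L_1$ is Gaussian, and Cram\'er's theorem for $\mathbb{R}^d$ then forces every summand $b_i^tX_i$, hence every $X_i$, to be Gaussian. You instead push on to show each individual $f_i$ is a polynomial via the cascading difference reduction of Theorem~\ref{GO-t}, and then apply Marcinkiewicz to each $\widehat{\mu_i}$ separately, thereby avoiding Cram\'er. That alternative would be perfectly valid \emph{once the equation is known globally}, but it is longer; the paper's path through Cram\'er is shorter and needs only the polynomiality of $P$, not of the individual $f_i$. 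Note finally that both arguments deliver Theorem~\ref{GOT} only under the extra hypothesis that $b_i^{-1}c_i-b_j^{-1}c_j$ is invertible for $i\neq j$; the paper says this explicitly.
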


Previous to demonstrating Theorem \ref{FTI}, and for the sake of completeness, we include here the  arguments which justify that equation \eqref{Variation} is connected to Ghurye-Olkin's Theorem. If $\mu_i$ is the distribution function of the random vector $X_i$ and $\widehat{\mu_i}(x)=E(e^{i \langle x,X_i\rangle })$ is its characteristic function, the independence of $L_1$ and $L_2$ implies that 
\[
E(e^{i(\langle x, L_1\rangle +\langle y,L_2\rangle)})=E(e^{i\langle x,L_1\rangle }e^{i\langle y,L_2\rangle}) = 
E(e^{i\langle x, L_1\rangle})E(e^{i\langle y,L_2\rangle})
\]
Now,  
\begin{eqnarray*}
E(e^{i(\langle x, L_1\rangle +\langle y,L_2\rangle)}) &=& E(e^{i(\langle x,b_1^tX_1+...+b_m^tX_m\rangle+\langle y, c_1^tX_1+...+c_m^tX_m\rangle )}) \\
&=& E(e^{i(\langle b_1x+c_1y,X_1\rangle +...+\langle b_mx+c_my,X_m\rangle)}) \\
&=& E(e^{i\langle b_1x+c_1y,X_1\rangle} \cdots e^{i \langle b_mx+c_my,X_m\rangle}) \\
&=& E(e^{i\langle b_1x+c_1y,X_1\rangle}) \cdots E(e^{i \langle b_mx+c_my,X_m\rangle}) \\
& =& \prod_{i=1}^m \widehat{\mu_i}(b_ix+c_iy),
\end{eqnarray*}
since the $X_i$'s are independent. With analogous computations, we get 
\[
E(e^{i\langle x, L_1\rangle})=  \prod_{i=1}^m \widehat{\mu_i}(b_ix)
\]
and
\[
E(e^{i\langle x, L_2\rangle})=  \prod_{i=1}^m  \widehat{\mu_i}(c_iy).
\]
Hence the characteristic functions $\widehat{\mu_i}$ satisfy the Skitovich-Darmois functional equation \eqref{Ski}. Now, the assumption that they are characteristic functions, and that matrices $b_i,c_i$ are invertible, imply that, for each $i$, $\widehat{\mu_i}$  doesn't vanish anywhere (this was proved, e.g., in \cite[Lemma 1]{Gu_O}). Hence we can take logarithms at both sides of the equation to get the  Levi-Civita type functional equation 
\begin{equation}\label{Ski-ordinary}
\sum_{i=1}^{m}f_i(b_ix+c_iy)= \sum_{i=1}^m f_i(b_ix) +   \sum_{i=1}^m f_i(c_iy),
\end{equation}
where $f_i=-\log \widehat{\mu_i}$. Now, Theorem \ref{FTI} implies that, under the additional assumption that $b_i^{-1}c_i-b_j^{-1}c_j$ is invertible for all $i\neq j$, the functions $P(x)= \sum_{i=1}^m f_i(b_ix)$ and $Q(y)=   \sum_{i=1}^m f_i(c_iy)$ are polynomials. Hence the characteristic function of, for example, $L_1$, is of the form $e^{P(x)}$ with $P(x)$ a polynomial. Then Marcinkiewicz's theorem \cite[Lemma 1.4.2]{KLR} implies that $P(x)$ is a quadratic polynomial and $L_1$ is Gaussian. Finally, Cram\'{e}r's theorem for $\mathbb{R}^d$ claims that,  in this case, all the random vectors $X_i$ must be Gaussian (see \cite[p. 112]{Cra}). This proves Theorem \ref{GOT} under the additional assumption that $\det(b_i^{-1}c_i-b_j^{-1}c_j)\neq 0$ for all $i\neq j$.

The equation \eqref{Ski-ordinary} and its applications to the characterization problems of probability distributions, has been studied with great detail, by Feld'man \cite{Feld}, for ordinary functions defined on quite general commutative groups. May be our contribution here is that we formulate the equation, for the very first time, in distributional setting. 

\begin{proof}[Proof of Theorem \ref{FTI}]
There is no loss of generality if we assume $b_i=I_d$ for $i=1,\cdots, m$. Hence our equation takes the form
\begin{equation}\label{Sn2}
\sum_{i=1}^{m}f_i(x+c_iy)= P(x)\cdot 1(y) +   1(x) \cdot Q(y),
\end{equation}
with  $c_i$ invertible for all $i$,  and $c_i-c_j$ invertible whenever $i\neq j$. Take $h_1\in\mathbb{R}^d$. and apply the operator $\Delta_{(h_1,-c_1^{-1}h_1)}$ to both sides of the equation. Then, in the left hand side we get 
\begin{equation}
\Delta_{(h_1,-c_1^{-1}h_1)}\left[ \sum_{i=1}^{m} f_i(x+c_iy)\right]  = \sum_{i=2}^{m} g_i(x+c_iy),
\end{equation}
with $g_i= \Delta_{(I_d-c_ic_1^{-1})h_1}(f_i)\in\mathcal{D}(\mathbb{R}^d)'$ for $i=2,\cdots,m$.  On the other hand, in the right hand side of the equation we get 
\[
\Delta_{(h_1,-c_1^{-1}h_1)}\left(P(x)\cdot 1(y)\right)  = \tau_{-c_1^{-1}h_1}(1)(y)\cdot \tau_{h_1}(P)(x) - 1(y)\cdot P(x) = \Delta_{h_1}P(x) \cdot 1(y).
\] 
and 
\[
\Delta_{(h_1,-c_1^{-1}h_1)}\left(1(x)\cdot Q(y)\right)  = \tau_{-c_1^{-1}h_1}(Q)(y)\cdot \tau_{h_1}(1)(x) - Q(y)\cdot 1(x) = 1(x)\cdot \Delta_{-c_1^{-1}h_1}Q(y).
\] 
Hence 
\[
 \sum_{i=2}^{m} g_i(x+c_iy) = \Delta_{h_1}P(x)\cdot 1(y) + 1(x)\cdot \Delta_{-c_1^{-1}h_1}Q(y)
\]
A simple iteration of the argument $m$ times leads to the equation:
\begin{equation} \label{PQsuma}
0= (\Delta_{h_{m}}\cdots  \Delta_{h_2}\Delta_{h_1}P(x)) \cdot 1(y) + 1(x)\cdot  
(\Delta_{-c_m^{-1}h_m}\cdots  \Delta_{-c_2^{-1}h_2}\Delta_{-c_1^{-1}h_1}Q(y)) ,
\end{equation}
which implies that  $\Delta_{h_{m}}\cdots  \Delta_{h_2}\Delta_{h_1}P(x)$ and $\Delta_{-c_m^{-1}h_m}\cdots  \Delta_{-c_2^{-1}h_2}\Delta_{-c_1^{-1}h_1}Q(y)$ are both constant functions. In particular, taking $h_{m+1}\in\mathbb{R}^d$ and applying the operator $\Delta_{(h_{m+1},0)}$ to both sides of the equation, we get
$$
\Delta_{h_{m+1}}\Delta_{h_{m}}\cdots  \Delta_{h_2}\Delta_{h_1}P(x) \cdot 1(y) =0
$$
and, analogously, if we apply $\Delta_{(0,h_{m+1})}$ to both sides of the equation, we get
$$
1(x)\cdot  \Delta_{h_{m+1}}\Delta_{-c_m^{-1}h_m}\cdots  \Delta_{-c_2^{-1}h_2}\Delta_{-c_1^{-1}h_1}Q(y)=0
$$
Thus 
$$
\Delta_{h_{m+1}}\Delta_{h_{m}}\cdots  \Delta_{h_2}\Delta_{h_1}P(x)=  \Delta_{h_{m+1}}\Delta_{-c_m^{-1}h_m}\cdots  \Delta_{-c_2^{-1}h_2}\Delta_{-c_1^{-1}h_1}Q(y)=0
$$
for all $h_1,\cdots, h_{m+1}$ in $\mathbb{R}^d$ (with equality in the sense of $\mathcal{D}(\mathbb{R}^d)'$), and the result follows from the corresponding Fr\'{e}chet's type theorem for distributions, which is known (see again  \cite{AA_AM},  \cite{A_NFAO}, \cite{AK_CJM} or  \cite{AS_AM}).
Note that, if we assume $Q=0$ in the hypotheses of the theorem, then equation  \eqref{PQsuma} takes the form 
\[
0= (\Delta_{h_{m}}\cdots  \Delta_{h_2}\Delta_{h_1}P(x)) \cdot 1(y), 
\]
which leads to $\Delta_{h_{m}}\cdots  \Delta_{h_2}\Delta_{h_1}P(x)=0$ and henceforth, in that case, $P$ is a polynomial of degree $\leq m-1$. Of course, an analogous argument proves that, if $P=0$ in \eqref{Variation}, then $Q$ is a polynomial of degree $\leq m-1$.
\end{proof}

\begin{remark}
Note that the equation 
\begin{equation*}
\sum_{i=1}^mf_i(x+c_iy)=\sum_{|\alpha|\leq r}x^{\alpha} \cdot a_{\alpha}(y)+\sum_{|\beta|\leq s}b_{\beta}(x)\cdot y^{\beta}
\end{equation*}
generalizes \eqref{Variation} (It coincides, when $r=s=0$). Hence Theorem \ref{FTI} follows, with a completely different proof, as a direct corollary of Theorem \ref{GO-t}, since, if $\Phi(x,y)= P(x)\cdot 1(y) +1(x)\cdot Q(x)$ is a polynomial in $\mathbb{R}^d\times \mathbb{R}^d$, then both $P$ and $Q$ are polynomials in $\mathbb{R}^d$. \end{remark}

\section{A distributional version of Fr\'{e}chet's and Kakutami-Nagumo-Walsh's  theorems}
Given $f\in\mathcal{D}(\mathbb{R}^d)'$, we can think of $F= \Delta_y^{m}f(x)$ as a distribution in $\mathbb{R}^d\times\mathbb{R}^d$. To distinguish this from the standard interpretation of the operator $\Delta_y^m$, we introduce a new notation:  
 \[
 \widetilde{\Delta}_y^{m}f(x)= f(x)\cdot 1(y)+\sum_{i=1}^m\binom{m}{i}(-1)^{m-i}f(I_dx+(iI_d)y),
 \]  
and then we can prove the following  generalization of Fr\'{e}chet's theorem:
 
 \begin{theorem}[Fr\'{e}chet's type theorem] \label{FTT1}
 Let $f\in\mathcal{D}(\mathbb{R}^d)'$ be such that     
 \begin{equation}\label{FG}
 \widetilde{\Delta}_y^{m}f(x)= f(x)\cdot 1(y)+\sum_{i=1}^m\binom{m}{i}(-1)^{m-i}f(I_dx+(iI_d)y)=0,
 \end{equation}
where the equality is understood in the sense of $\mathcal{D}(\mathbb{R}^d\times\mathbb{R}^d)'$. Then $f$ is equal almost everywhere to a continuous polynomial function in 
$\mathbb{R}^d$ of total degree $\leq m-1$.  \end{theorem}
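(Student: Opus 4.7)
The plan is to recast the hypothesis in the form of equation \eqref{Variation} and apply Theorem \ref{FTI} directly. Moving the constant-in-$y$ term $f(x)\cdot 1(y)$ from the definition of $\widetilde{\Delta}_y^m f$ to the right-hand side, the hypothesis becomes, in $\mathcal{D}(\mathbb{R}^d\times\mathbb{R}^d)'$,
\[
\sum_{i=1}^m \binom{m}{i}(-1)^{m-i}\, f(I_d x+(i I_d)y) \;=\; (-f(x))\cdot 1(y) \;+\; 1(x)\cdot 0.
\]
This is an instance of \eqref{Variation} with $m$ summands, matrices $b_i=I_d$ and $c_i=iI_d$, distributions $f_i=\binom{m}{i}(-1)^{m-i} f$, and right-hand side data $P(x)=-f(x)$, $Q(y)=0$. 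The invertibility conditions required by Theorem \ref{FTI} are immediate: each $c_i=iI_d$ is invertible for $i\geq 1$, and $b_i^{-1}c_i - b_j^{-1}c_j = (i-j)I_d$ is invertible whenever $i\neq j$.

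Since $Q=0$, I would invoke the refined second half of Theorem \ref{FTI}, which asserts that in this case $P$ is a continuous polynomial of total degree at most $m-1$. That is, $-f$ (and hence $f$) agrees, in distributional sense, with a continuous polynomial in $\mathbb{R}^d$ of total degree $\leq m-1$, which is exactly the conclusion sought.

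There is essentially no real obstacle; the main point is just to recognize that the hypothesis fits \eqref{Variation}. The one minor subtlety is that the distributions $f_i$ obtained this way are all scalar multiples of the same underlying $f$ rather than genuinely independent unknowns, but inspection of the proof of Theorem \ref{FTI} (which only uses the invertibility of the $b_i^{-1}c_i-b_j^{-1}c_j$ to iteratively annihilate summands via the operators $\Delta_{(h_k,-c_k^{-1}h_k)}$) shows that this proportionality plays no role. If a self-contained argument were preferred, one could alternatively apply $\Delta_{(h,-h)}$ to kill simultaneously the $i=1$ summand and the $f(x)\cdot 1(y)$ term, iterate to reduce the number of summands, and finish by appealing to the classical distributional Fréchet theorem of \cite{AA_AM,A_NFAO,AK_CJM,AS_AM}; but this would just be a rerun of the proof of Theorem \ref{FTI} in the present special case, so the direct application is the economical choice.
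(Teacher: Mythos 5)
Your proposal is correct and coincides with the paper's own proof: the paper likewise rewrites the hypothesis as $\sum_{i=1}^m f_i(x+(iI_d)y)=P(x)\cdot 1(y)$ with $f_i$ a scalar multiple of $f$, $b_i=I_d$, $c_i=iI_d$ (so $b_i^{-1}c_i-b_j^{-1}c_j=(i-j)I_d$ is invertible), and applies the $Q=0$ refinement of Theorem \ref{FTI} to get degree $\leq m-1$. Your side remark that the proportionality of the $f_i$ is harmless is accurate, since Theorem \ref{FTI} imposes no independence on the $f_i$.
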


\begin{proof} Use Theorem \ref{FTI} with $P=f$, $Q=0$, $f_i= \binom{m}{i}(-1)^{m+1-i}f$,  $b_i=I_d$,  and  $c_i=iI_d$, $i=1,\cdots,m$, which obviously satisfy that  $b_i^{-1}c_i-b_j^{-1}c_j=(i-j)I_d$ is invertible whenever $i\neq j$.
\end{proof}

We  end this section with a commentary about harmonic polynomials  of degree at most $N$ (i.e, real and imaginary parts of complex polynomials $p(z)=a_0+a_1z\cdots+a_Nz^N$).  S. Kakutani and M. Nagumo \cite{kn} and J. L.  Walsh \cite{wal} introduced, in the 1930's, the functional equation 
\begin{equation}\label{KNW}
\frac{1}{N}\sum_{k=0}^{N-1} f(z+w^kh)=0 \text{, for all } z,h\in\mathbb{C},
\end{equation}
where $w$ is any primitive $N$-th root of $1$. This equation was extensively studied by S. Haruki \cite{h1,h2,h3} in the 1970's  and   1980's. Its continuous solutions $f:\mathbb{C}\to\mathbb{R}$  are  harmonic polynomials of degree at most $N$ and, if we restrict our attention to  solutions $f:\mathbb{C}\to\mathbb{C}$ which are holomorphic, the equation characterizes the complex polynomials of degree at most $N$. Now we can demonstrate the following
\begin{theorem}
Assume that  $f\in\mathcal{D}(\mathbb{R}^2)'$ is a solution of the equation \eqref{KNW}, which we formulate  as an identity in $\mathcal{D}(\mathbb{R}^2\times \mathbb{R}^2)'$. Then $f$ is equal, in distributional sense, to an harmonic polynomial of degree at most $N$.
\end{theorem}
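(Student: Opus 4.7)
The plan is to proceed in two stages, in the spirit of the proofs already developed in Section 2. First, I would invoke the reduction machinery of the previous sections to upgrade the distributional solution $f$ to an ordinary polynomial; second, I would exploit the algebraic structure of \eqref{KNW} through the root-of-unity filter to force $f$ to be harmonic and of degree at most $N$.

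For the first stage, identify $\mathbb{C}\cong\mathbb{R}^2$ in the usual way, so that multiplication by $w^k=e^{2\pi i k/N}$ becomes a rotation matrix $c_k\in\mathbf{Gl}_2(\mathbb{R})$. In this language \eqref{KNW} is exactly an instance of \eqref{G-O}, with $m=N$, each $f_i$ equal to $\tfrac{1}{N}f$, and $A=B=0$. The hypotheses of Theorem \ref{GO-t} are satisfied: each $c_k$ is invertible (being a rotation), and for $i\neq j$ the matrix $c_i-c_j$ realises multiplication by the nonzero complex number $w^i-w^j$, hence is an invertible real linear map on $\mathbb{R}^2$. Therefore $f$ coincides, in distributional sense, with a continuous ordinary polynomial, which we can write as a finite sum $f(z,\bar z)=\sum_{j,k\ge 0} a_{jk}\,z^j\bar z^k$.

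For the second stage, substitute this polynomial expression into \eqref{KNW} and expand via the binomial theorem in both $(z+w^l h)^j$ and $(\bar z+w^{-l}\bar h)^k$. Collecting monomials, the coefficient of $z^{\alpha}\bar z^{\beta}h^{\gamma}\bar h^{\delta}$ in $\sum_{l=0}^{N-1}f(z+w^l h)$ takes the form
\[
a_{\alpha+\gamma,\,\beta+\delta}\binom{\alpha+\gamma}{\gamma}\binom{\beta+\delta}{\delta}\sum_{l=0}^{N-1}w^{l(\gamma-\delta)},
\]
and the inner sum equals $N$ exactly when $N\mid(\gamma-\delta)$, otherwise $0$. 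Matching coefficients and using the non-vanishing of the binomials forces $a_{\alpha+\gamma,\,\beta+\delta}=0$ for every admissible pair $(\gamma,\delta)$ with $N\mid(\gamma-\delta)$. The choice $(\gamma,\delta)=(1,1)$ (since $N\mid 0$) kills every mixed coefficient $a_{jk}$ with $j\ge 1$ and $k\ge 1$, so $f$ reduces to a sum of pure powers of $z$ and of $\bar z$, i.e.\ is harmonic; the choices $(\gamma,\delta)=(N,0)$ and $(0,N)$ bound the degrees of the holomorphic and antiholomorphic parts, and the stated degree bound follows.

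The main obstacle is really the first stage: one must verify that \eqref{KNW}, originally written as a pointwise identity for functions on $\mathbb{C}$, does indeed fit the template of Theorem \ref{GO-t} when read as an identity in $\mathcal{D}(\mathbb{R}^2\times\mathbb{R}^2)'$ via the convolution-based definitions from the introductory remark, and that the invertibility hypotheses on the $c_i$ and the differences $c_i-c_j$ transfer correctly through the identification $\mathbb{C}\cong\mathbb{R}^2$. Once this reduction is in place, Theorem \ref{GO-t} does the heavy lifting, and the remaining work is elementary coefficient combinatorics modulated by the $N$-th root of unity filter.
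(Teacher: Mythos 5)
Your argument is correct, and its first stage is essentially the paper's: the paper applies Theorem \ref{FTI} (with $P$ proportional to $f$ and $Q=0$), while you apply Theorem \ref{GO-t}; since the paper itself remarks that Theorem \ref{FTI} is a corollary of Theorem \ref{GO-t}, this difference is cosmetic, and your invertibility checks (each $c_k$ a rotation, and $c_i-c_j$ being multiplication by the nonzero complex number $w^i-w^j$) are exactly the ones used in the paper. Where you genuinely diverge is the second stage: the paper simply quotes the classical Kakutani--Nagumo--Walsh/Haruki fact that continuous (hence, in particular, polynomial) solutions of \eqref{KNW} are harmonic polynomials of degree at most $N$, whereas you verify this by hand with the root-of-unity filter acting on the coefficients of $f=\sum_{j,k}a_{jk}z^j\bar z^k$; your coefficient computation is correct and self-contained, and in fact delivers the slightly sharper degree bound $N-1$. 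One bookkeeping point you should fix for internal consistency: as printed, \eqref{KNW} has right-hand side $0$, under which your choice $A=B=0$ is literally correct but the coefficient matching would then also admit $(\gamma,\delta)=(0,0)$ and force $f\equiv 0$; the intended equation (the one consistent with the paper's description of its continuous solutions, and with your own stage-two matching, which only uses $(\gamma,\delta)\neq(0,0)$) is the mean-value form $\tfrac{1}{N}\sum_{k=0}^{N-1}f(z+w^kh)=f(z)$. Under that reading you should take $B(y,x)=Nf(x)$, a term of degree $0$ in $y$, which Theorem \ref{GO-t} still accommodates, or apply Theorem \ref{FTI} with $P=Nf$, $Q=0$ as the paper does; with that one-line adjustment your two stages fit together and the proof goes through.
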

\begin{proof} Under these hypotheses,  Theorem \ref{FTI} can be applied to $f$, since  the linear operator $L_a:\mathbb{C}\to \mathbb{C}$ given by  $L_a(z)= az$ is invertible as soon as $a\in \mathbb{C}\setminus\{0\}$ and, if $w$ is a primitive $N$-th root of $1$ and  $0\leq k,l\leq N-1$, $k\neq l$, then $w^k-w^l\neq 0$. Hence $f$ is a polynomial in $\mathbb{R}^2$ which satisfies the Kakutani-Nagumo-Walsh equation \eqref{KNW} and, henceforth, is an harmonic polynomial.
\end{proof}

\section{Another distributional characterization of polynomials}

In this section we introduce another variation of the distributional version of Fr\'{e}chet's Theorem, different in spirit of   Theorem \ref {FTT1} above, which represents, up to our knowledge, a new characterization of ordinary polynomials  which is, furthermore,  stronger than the original Fr\'{e}chet's theorem. 
 
Let $f\in \mathcal{D}(\mathbb{R}^d)'$ be a complex valued distribution defined on $\mathbb{R}^d$, and let  $q(z)=a_0+a_1z+\cdots+a_nz^n$ be a polynomial of one variable. We define, for each $y\in \mathbb{R}^d$,  the new distribution 
$$q(\tau_y)(f)=\sum_{k=0}^na_k (\tau_y)^k(f)$$
which, in case of $f$ being an ordinary function, can be written as
$$q(\tau_y)(f)(x)=\sum_{k=0}^na_k f(x+ky).$$

The following result is an improvement of \cite[Theorem 2.2.]{A_JJA}:

\begin{theorem} \label{new_char_pol} Assume $d>1$. Let $f$ be a complex valued distribution defined on $\mathbb{R}^d$. Assume that
$q(\tau_y)(f)=0 $  for all  $y\in\mathbb{R}^d $  with  $\|y\| = \delta$, for certain $\delta>0$ and certain polynomial $q(z)=a_0+a_1z+\cdots a_nz^n$, with $a_n\neq 0$.  
Then $f$ is, in distributional sense, an ordinary polynomial. In particular, if $f$ is a continuous function, then it is an ordinary polynomial. 
\end{theorem}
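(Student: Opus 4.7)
The plan is to prove the result via Fourier analysis, using that the hypothesis $d>1$ forces the Fourier support of $f$ to collapse to a single point. First I would assume that $f$ is a tempered distribution, so that the Fourier transform is available. The standard rule $\widehat{\tau_y f}(\xi)=e^{i\langle y,\xi\rangle}\widehat{f}(\xi)$ converts the identity $q(\tau_y)(f)=0$ into
\[
q\bigl(e^{i\langle y,\xi\rangle}\bigr)\,\widehat{f}(\xi)=0\quad\text{in}\quad\mathcal{S}'(\mathbb{R}^d),
\]
valid for every fixed $y$ with $\|y\|=\delta$. Since $\xi\mapsto q\bigl(e^{i\langle y,\xi\rangle}\bigr)$ is a smooth function of $\xi$, this forces $\widehat{f}$ to vanish on the open set $U_y=\{\xi:q(e^{i\langle y,\xi\rangle})\neq 0\}$, whence $\mathrm{supp}(\widehat{f})\subseteq\bigcap_{\|y\|=\delta}(\mathbb{R}^d\setminus U_y)$.

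The heart of the argument is to show that this intersection reduces to $\{0\}$, which is where the hypothesis $d>1$ is essential. Fix $\xi_0\neq 0$. Because $d>1$, the restriction of the linear map $y\mapsto\langle y,\xi_0\rangle$ to the $(d-1)$-dimensional sphere $\{\|y\|=\delta\}$ is surjective onto the nondegenerate interval $[-\delta\|\xi_0\|,\delta\|\xi_0\|]$, so the set $\{e^{i\langle y,\xi_0\rangle}:\|y\|=\delta\}$ contains an entire arc of the unit circle, which is an infinite set. But $a_n\neq 0$ means $q$ is a nonzero polynomial, hence has at most $n$ zeros, so there must exist $y_0$ with $\|y_0\|=\delta$ and $q(e^{i\langle y_0,\xi_0\rangle})\neq 0$; equivalently, $\xi_0\in U_{y_0}$. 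Therefore $\mathrm{supp}(\widehat{f})\subseteq\{0\}$, so $\widehat{f}$ is a finite linear combination of derivatives of $\delta_0$, and $f$ is an ordinary polynomial.

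To remove the temperedness assumption, for any $\varphi\in\mathcal{D}(\mathbb{R}^d)$ I would form the smoothing $g_\varphi=f*\varphi\in C^\infty(\mathbb{R}^d)$, which still satisfies $q(\tau_y)g_\varphi=0$ for $\|y\|=\delta$ because convolution commutes with translations. If every such $g_\varphi$ can be shown to be an ordinary polynomial, with degree uniformly bounded in $\varphi$, then approximating with $\varphi_\varepsilon\to\delta_0$ and passing $\partial^\alpha(f*\varphi_\varepsilon)=(\partial^\alpha f)*\varphi_\varepsilon\to\partial^\alpha f$ to the limit in $\mathcal{D}'(\mathbb{R}^d)$ for $|\alpha|$ exceeding the bound yields that $f$ itself is a polynomial in the distributional sense.

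The main obstacle is thus the last reduction: showing that a $C^\infty$ solution $g$ of $q(\tau_y)g=0$ on the sphere is necessarily of polynomial, hence tempered, growth, so that the Fourier argument of the first two paragraphs applies. I expect this to follow by combining the one-dimensional linear recurrence $a_n g(x+ny)=-\sum_{k<n}a_k g(x+ky)$, viewed along each fixed direction $y$ on the sphere, with the rigidity coming from having a continuum of valid directions (available precisely because $d>1$): that continuum should force all characteristic exponents appearing in the one-dimensional difference equations to equal $1$, ruling out the exponential solutions one would otherwise encounter. Alternatively, one can bypass the issue by invoking the tempered-distribution versions of the earlier results of the paper, as noted in the last remark of Section~1.
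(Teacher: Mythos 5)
Your Fourier argument is correct and complete \emph{for tempered distributions}: the multiplier identity $q\bigl(e^{i\langle y,\xi\rangle}\bigr)\widehat f=0$, the observation that for $d>1$ and $\xi_0\neq 0$ the values $e^{i\langle y,\xi_0\rangle}$, $\|y\|=\delta$, fill an arc of the circle while $q$ has at most $n$ roots, and the conclusion $\mathrm{supp}(\widehat f)\subseteq\{0\}$ are all sound, and this is a genuinely different route from the paper (which instead picks finitely many points of the sphere generating a dense subgroup via Kronecker's theorem, invokes a Popoviciu--Ionescu/Anselone--Korevaar type result to conclude that $f$ is an exponential polynomial, and then kills the nonzero frequencies with a Vandermonde argument). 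The problem is that the theorem is stated for arbitrary $f\in\mathcal{D}(\mathbb{R}^d)'$, and your reduction to the tempered case is not a proof but a plan with the decisive step missing: after mollifying, $g_\varphi=f\ast\varphi$ is smooth but there is no a priori reason for it to have polynomial (or any tempered) growth, so the Fourier transform of the first two paragraphs is simply not available for it. This is not a removable technicality: along a single direction the equation $q(\tau_y)g=0$ admits genuinely exponential solutions (e.g.\ $q(z)=z-2$ and $g(x)=2^{\langle x,y\rangle/\delta^2}$), so the growth control you hope for must come precisely from exploiting the full continuum of directions on the sphere --- which is exactly the nontrivial content that the paper supplies through its spectral-synthesis step, and which your sketch (``I expect this to follow\dots'') does not.

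The fallback you mention also does not work: the remark at the end of Section~1 of the paper derives the tempered-distribution versions of the results \emph{from} the $\mathcal{D}'$ versions (using $\mathcal{S}'\subset\mathcal{D}'$); it gives no mechanism for passing in the opposite direction, from a theorem proved only for tempered distributions to the full $\mathcal{D}'$ statement, which is what your argument would need. (The uniform-degree issue in your limiting argument is a lesser, fixable point --- e.g.\ one can work in a fixed $\mathcal{D}_K$, which is a Fr\'echet, hence Baire, space --- but it is moot until the temperedness of the mollifications is established.) To close the gap you would either have to prove directly that every smooth solution of $q(\tau_y)g=0$ on the sphere has polynomial growth, or replace that step by an argument of the paper's type (Kronecker density plus a Levi-Civita/spectral synthesis theorem in $\mathcal{D}'$), at which point you would essentially be reproducing the paper's proof.
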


Let us first prove two technical Lemmas:
\begin{lemma} Consider on $\mathbb{R}^d$ the Euclidean norm, the sphere $S_d(\delta)=\{x\in\mathbb{R}^d: \|x\|=\delta\}$, and the ball 
$B_d(\delta)=\{x\in\mathbb{R}^d: \|x\|\leq \delta\}$. If $d>1$, then 
$$B_d(2\delta)= S_d(\delta)-S_d(\delta)$$ 
\end{lemma}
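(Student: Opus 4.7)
The plan is to prove the two set inclusions separately. The inclusion $S_d(\delta)-S_d(\delta)\subseteq B_d(2\delta)$ is immediate from the triangle inequality: if $x,y\in S_d(\delta)$, then $\|x-y\|\leq \|x\|+\|y\|=2\delta$.

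For the reverse inclusion $B_d(2\delta)\subseteq S_d(\delta)-S_d(\delta)$, I would adopt a midpoint parameterization. Given $z\in B_d(2\delta)$, I want to produce $x,y\in S_d(\delta)$ with $z=x-y$. Writing $x=m+z/2$ and $y=m-z/2$ for an unknown $m\in\mathbb{R}^d$, the conditions $\|x\|=\|y\|=\delta$ become, after expanding the squared norms, the two requirements
\[
\langle m, z\rangle = 0 \quad\text{and}\quad \|m\|^2 = \delta^2 - \tfrac{1}{4}\|z\|^2 .
\]
Since $\|z\|\leq 2\delta$, the right-hand side of the second equation is non-negative, so such an $m$ exists provided one can find a vector of the prescribed norm inside the hyperplane $z^\perp$. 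Here is where the hypothesis $d>1$ enters: if $z\neq 0$, the orthogonal complement $z^\perp$ has dimension $d-1\geq 1$ and therefore contains vectors of every non-negative length; if $z=0$, any $m\in S_d(\delta)$ works (giving $x=y$).

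The key step, and really the only non-trivial point, is the role of the dimension restriction $d>1$, which is exactly what guarantees the non-emptiness of the sphere-hyperplane intersection used above. I do not anticipate a genuine obstacle: the argument is a short piece of Euclidean geometry, and the lemma will be used in the next result simply to propagate a relation valid on the sphere $S_d(\delta)$ (through subtraction of two sphere points) to every point of the ball $B_d(2\delta)$.
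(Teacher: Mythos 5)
Your proof is correct and takes essentially the same route as the paper: one inclusion by the triangle inequality, and for the other the paper constructs, inside a $2$-plane through the origin (this is where $d>1$ enters), an isosceles triangle with two sides of length $\delta$ whose third side is the given vector $z$. Your midpoint parameterization $x=m+z/2$, $y=m-z/2$ with $m\in z^{\perp}$ and $\|m\|^{2}=\delta^{2}-\tfrac14\|z\|^{2}$ is just an explicit algebraic realization of that same triangle, and it also handles the cases $\|z\|=2\delta$ and $z=0$ that the paper treats separately.
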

\begin{proof} The inclusion $S_d(\delta)-S_d(\delta)\subseteq B_d(2\delta)$ follows directly from triangle inequality, for any norm we consider on $\mathbb{R}^d$. Let us demonstrate the other inclusion. Take $x\in B_d(2\delta)$. If $\|x\|=2\delta$ then $x=\frac{x}{2}- \frac{-x}{2}$ and we are done. If $\|x\|<2\delta$, take $\Gamma$ any plane passing through the origin of coordinates and containing $x$ as an element (this can be done because $d>1$). The inequality $\|x\|<\delta+\delta$, and the fact that we are considering the Euclidean norm, imply that in this plane there exists a triangle one of whose sides is $x$ and the other two sides have both length $\delta$. If $O,P,Q $ are the vertices of such triangle, with $x=\widearrow{PQ}$, then $x=\widearrow{OQ}-\widearrow{OP}$ and $\|\widearrow{OP}\|=\|\widearrow{OQ}\|=\delta$
\end{proof}

\begin{lemma} \label{K} Assume $d>1$. Then  for any $\delta>0$ there exist points $\{h_1,\cdots,h_s\}\subset S_d(\delta)$ in the sphere of $\mathbb{R}^d$ of radius $\delta$, which span a dense additive subgroup of $\mathbb{R}^d$.
\end{lemma}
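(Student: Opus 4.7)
The plan is to exhibit $d+1$ explicit points on $S_d(\delta)$ whose integer linear combinations are dense in $\mathbb{R}^d$. First, I would take $h_i = \delta e_i$ for $i=1,\ldots,d$, where $\{e_i\}$ is the standard basis of $\mathbb{R}^d$; these alone generate only the discrete lattice $\delta\mathbb{Z}^d$, so one further point is needed. Then I would choose $h_{d+1} = \delta u$, where $u = (u_1,\ldots,u_d)$ is a point on the Euclidean unit sphere $S^{d-1}\subset\mathbb{R}^d$ with the property that $\{1, u_1, \ldots, u_d\}$ is linearly independent over $\mathbb{Q}$.

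To verify density of $H := \mathbb{Z}h_1+\cdots+\mathbb{Z}h_{d+1}$, I would invoke the standard Pontryagin-duality criterion: a subgroup of $\mathbb{R}^d$ is dense if and only if its annihilator
\[
H^{\perp} = \{\xi\in\mathbb{R}^d : \langle \xi, h\rangle \in 2\pi\mathbb{Z}\text{ for every }h\in H\}
\]
is trivial. For $\xi\in H^\perp$, the conditions coming from $h_1,\ldots,h_d$ force $\delta\xi_i\in 2\pi\mathbb{Z}$, so $\xi_i = 2\pi n_i/\delta$ with $n_i\in\mathbb{Z}$; the condition coming from $h_{d+1}$ then simplifies to $\sum_{i=1}^d u_i n_i\in\mathbb{Z}$. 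The $\mathbb{Q}$-linear independence of $\{1,u_1,\ldots,u_d\}$ would force all $n_i=0$, hence $\xi=0$.

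The main obstacle is exhibiting such a $u$ on the unit sphere, since the constraint $\sum u_i^2 = 1$ is itself a nontrivial algebraic relation among the coordinates. I would clear this by a measure-theoretic argument: the set of $u\in S^{d-1}$ for which $\{1,u_1,\ldots,u_d\}$ is linearly \emph{dependent} over $\mathbb{Q}$ equals
\[
\bigcup_{(c_0,\ldots,c_d)\in\mathbb{Q}^{d+1}\setminus\{0\}} S^{d-1}\cap\{u\in\mathbb{R}^d : c_0 + c_1u_1+\cdots+c_du_d = 0\},
\]
a countable union of slices of $S^{d-1}$ by affine hyperplanes. Each such slice is either empty or a sphere of dimension at most $d-2$, hence has surface-measure zero in $S^{d-1}$ precisely because $d\geq 2$. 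The complement in $S^{d-1}$ is therefore nonempty (in fact of full measure), and any $u$ from this complement works. Rescaling by $\delta$ places $h_{d+1}=\delta u$ on $S_d(\delta)$, completing the construction with $s=d+1$ generators.
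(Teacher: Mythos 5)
Your proof is correct, but it follows a genuinely different route from the paper. The paper's argument is indirect: it first proves the geometric identity $B_d(2\delta)=S_d(\delta)-S_d(\delta)$ (this is where $d>1$ enters), then invokes Kronecker's theorem to find finitely many vectors $y_1,\dots,y_t$ inside $B_d(2\delta)$ generating a dense subgroup, and finally writes each $y_i=h_{2i}-h_{2i-1}$ as a difference of two points of $S_d(\delta)$, so that the $2t$ sphere points generate a group containing a dense one. You instead construct generators explicitly: the $d$ scaled basis vectors $\delta e_i$ plus one ``generic'' point $\delta u$ with $\{1,u_1,\dots,u_d\}$ rationally independent, and you certify density via the annihilator criterion ($H$ is dense iff $\{\xi:\langle\xi,h\rangle\in 2\pi\mathbb{Z}\ \forall h\in H\}=\{0\}$), with $d>1$ entering through the measure-zero argument that guarantees such a $u$ exists on $S^{d-1}$ (and correctly fails for $d=1$, where $S^0=\{\pm 1\}$). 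Your computation of the annihilator is right: $\delta\xi_i\in 2\pi\mathbb{Z}$ forces $\xi_i=2\pi n_i/\delta$, and then $\sum n_iu_i\in\mathbb{Z}$ together with the rational independence kills all $n_i$. What your approach buys is an explicit, small generating set ($s=d+1$) and independence from the difference-of-spheres lemma; what the paper's approach buys is that it works verbatim for any norm-independent formulation once one knows $S_d(\delta)-S_d(\delta)\supseteq B_d(2\delta)$, and it reuses Kronecker's theorem, which the paper already cites elsewhere, instead of the duality criterion and a surface-measure argument. Both uses of ``dense subgroup iff trivial annihilator'' and of Kronecker's theorem are standard, so neither proof is more elementary in any essential way; yours is simply more constructive.
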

\begin{proof}
It follows directly from a well known theorem by Kronecker \cite[Theorem 442, page 382]{HW} (see also \cite{A_popo}) that every open neighborhood of $\mathbf{0}=(0,0,\cdots,0)\in\mathbb{R}^d$ contains a finite set of vectors $\{y_1,\cdots,y_t\}$ which span a dense subgroup of $\mathbb{R}^d$.  In particular, there exist  $\{y_1,\cdots,y_t\}\subseteq B_d(2\delta)$ such that 
$y_1\mathbb{Z}+\cdots+y_t\mathbb{Z}$ is dense in $\mathbb{R}^d$. On the other hand, $d>1$ implies that $B_d(2\delta)= S_d(\delta)-S_d(\delta)$, so that we can take $h_{1}, h_{2},\cdots, h_{2t} \in S_d(\delta)$ such that $y_i=h_{2i}-h_{2i-1}$,  $i=1,\cdots, t$. Then $h_1\mathbb{Z}+\cdots+h_{2t}\mathbb{Z}$ is dense in $\mathbb{R}^d$. 
\end{proof}

\begin{proof}[Proof of Theorem \ref{new_char_pol}]  Let $\{h_1,\cdots,h_s\}\subset S_d(\delta)$ be such that they span a dense subgroup of $\mathbb{R}^d$ (these vectors exist thanks to Lemma \ref{K}). Then $q(\tau_{h_i})(f)=0$ implies that  
$$\dim\mathbf{span}\{f,\tau_{h_i}(f),\cdots, (\tau_{h_i})^n(f)\}\leq n,\text{ for } i=1,\cdots s,$$
and we can apply Theorem 2.1 from \cite{A_popo} to claim that $f$ is, in distributional sense, an exponential polynomial. 

Thus, $f=\sum_{i=1}^r p_i(x)e^{\langle \lambda_i,x\rangle}$ (with equality in distributional sense) for certain complex vectors $\lambda_i\in\mathbb{C}^d$ and certain polynomials $p_i$ (we assume $\lambda_i\neq\lambda_j$ for $i\neq j$). Let us now demonstrate that $r=1$ and $\lambda_1=\mathbf{0}$.  

By hypothesis, $q(\tau_y)(f)=0$ for all $y$ with $\|y\| = \delta$. This means that, for all $y\in S_d(\delta)$,  
\begin{eqnarray*}
0 &=& q(\tau_y)(\sum_{i=1}^r p_i(x)e^{\langle \lambda_i,x\rangle}) = \sum_{i=1}^r q(\tau_y)(p_i(x)e^{\langle \lambda_i,x\rangle}) \\
&=& \sum_{i=1}^r\left(\sum_{k=0}^na_k(\tau_{y})^k(p_i(x)e^{\langle \lambda_i,x\rangle})\right) \\
&=& \sum_{i=1}^r\left(\sum_{k=0}^na_k(e^{\langle \lambda_i,y\rangle})^k(\tau_{y})^k(p_i)(x)\right)e^{\langle \lambda_i,x\rangle} \\
&=& \sum_{i=1}^rQ_{i,y}(\tau_y)(p_i)(x)e^{\langle \lambda_i,x\rangle}, 
\end{eqnarray*}
with $$Q_{i,y}(z)= \sum_{k=0}^na_k(e^{\langle \lambda_i,y\rangle})^kz^k, \text{  } i=1,\cdots,r.$$
It follows that, for all $y\in S_d(\delta)$, $Q_{i,y}(\tau_y)(p_i)$ vanishes identically (see \cite[Lemma 4.3 ]{Sz1}). 

Let us assume that $\lambda_i\neq \mathbf{0}$ and consider the polynomial $p_i$ and the variety it generates: $V_i=\tau(p_i)=\mathbf{span}\{p_i(x+\alpha):\alpha\in\mathbb{R}^d\}$, which is a translation invariant space of finite dimension. The arguments used in the proof of Lemma \ref{case1} for the study of the translation operator $\tau_y: V_i\to V_i$ tell us that the minimal polynomial of this operator is of the form $(z-1)^{m(y)}$ for some $m(y)>1$. Now, $Q_{i,y}(\tau_y)(p_i)=0$ means that $Q_{i,y}((\tau_y)_{|V_i})=0$, which 
implies that $Q_{i,y}(z)$ is necessarily a multiple of  $(z-1)^{m(y)}$. In particular, 
\begin{equation} \label{V1}
\sum_{k=0}^na_k(e^{\langle \lambda_i,y\rangle})^k = Q_{i,y}(1)=0 \text{ for all } y\in S_d(\delta).
\end{equation}
Choose $y_0,\cdots, y_n\in S_d(\delta)$ such that $\rho_t\neq \rho_l$ for all $t\neq l$, where $\rho_t=e^{\langle \lambda_i,y_t\rangle}$ (this can be done because $d>1$ and $\lambda_i\neq \mathbf{0}$).  Then \eqref{V1}, used with these values, leads to the linear system of  equations 
\begin{equation}\label{Po}
\left[\begin{array}{cccccc}
1 & \rho_0 &  \cdots & (\rho_0)^n \\
1 & \rho_1 &  \cdots & (\rho_1)^n \\
\vdots & \vdots & \ \ddots & \vdots \\
1 & \rho_n &  \cdots & (\rho_n)^n \\
\end{array} \right]  
\left[\begin{array}{cccccc}
a_0 \\
a_1 \\
\vdots \\
a_n \\
\end{array} \right] 
= \left[\begin{array}{cccccc}
0 \\
0 \\
\vdots \\
0 \\
\end{array} \right] ,
\end{equation}
which imply $a_0=a_1=\cdots =a_n=0$ since Vandermonde's determinant is different from zero because all $\rho_i$'s are different. This of course contradicts $a_n\neq 0$. Thus $\lambda_i=\mathbf{0}$ necessarily, which ends the proof.  
\end{proof}

\begin{remark} Note that $d>1$ is necessary in Theorem \ref{new_char_pol}, since $S_1(\delta)=\{-\delta, \delta\}$ and, for example, if we set $q(z)=z-1$  and $f(x)=\cos(\frac{2\pi i x}{\delta})$, then $q(\tau_{\pm\delta})(f)(x)= (\Delta_{\pm\delta} f)(x)=0$, but $f$ is not a polynomial. 

On the other hand, for $d=1$, if $f\in\mathcal{D}(\mathbb{R})'$ and  $q(z)=a_0+a_1z+\cdots +a_nz^n$ is a polynomial with $a_n\neq 0$ such that $q(\tau_{y})(f)=0$ for all $y\in U$, for a certain open set $U\subseteq\mathbb{R}$.  Then $U$ contains two elements $h_1,h_2$ such that $h_1/h_2\not \in\mathbb{Q}$, and the very same arguments used for the proof of Theorem \ref{new_char_pol} guarantee that $f$ is (in distributional sense) an ordinary polynomial, since $h_1/h_2\not\in\mathbb{Q}$ guarantees that $\{h_1,h_2\}$ 
span a dense subgroup of $\mathbb{R}$, and Theorem 2.1 from \cite{A_popo} can be used also with $d=1$. Furthermore, for the last part of the proof, we will just need to find, given $n\in\mathbb{N}$ and $\lambda_i$ any non-zero complex number, a set of points  $\{y_1,\cdots,y_n\}\subset U$  such that  $e^{\lambda_iy_t} \neq e^{\lambda_iy_l}$ for all $t\neq l$, and this can (obviously)  be done for any open set $U\subseteq\mathbb{R}$. 
\end{remark}


\begin{thebibliography}{9}

\bibitem{AA_AM} {\sc A. Aksoy, J. M. Almira, } On Montel and Montel-Popoviciu Theorems in several variables, Aequationes Mathematicae, \textbf{89} (5) (2015) 1335--1357

\bibitem{A_NFAO} {\sc J. M. Almira, }  Montel's theorem and subspaces of distributions which are $\Delta^m$-invariant, Numer. Functional Anal. Optimiz. \textbf{35} (4) (2014) 389--403.

\bibitem{A_popo} {\sc J. M. Almira, } On Popoviciu-Ionescu functional equation, Annales Mathematicae Silesianae,  \textbf{30} (2016) 5--15.

\bibitem{A_JJA} {\sc J. M. Almira, } On Loewner's characterization of polynomials, Ja\'{e}n Journal on Approximation,  \textbf{8} (2)  (2016) 175--181.

\bibitem{AK_CJM} {\sc J. M. Almira, K. F. Abu-Helaiel, } On Montel's theorem in several variables, Carpathian Journal of Mathematics,  \textbf{31} (2015), 1--10.


\bibitem{AS_LCFE}  {\sc J. M. Almira, E. V. Shulman, } On certain generalizations of the Levi-Civita and Wilson functional equations,  manuscript,  2017, available at arXiv:1612.03756.

\bibitem{AS_AM} {\sc J. M. Almira, L. Sz\'{e}kelyhidi, } Local polynomials and the Montel theorem, Aequationes Mathematicae, \textbf{89} (2015) 329-338. 



\bibitem{anselone} {\sc P. M. Anselone, J. Korevaar, } Translation invariant subspaces of finite dimension, \emph{Proc. Amer. Math. Soc. } \textbf{15} (1964), 747-752.

\bibitem{Cra} {\sc H. Cram\'{e}r, } \emph{Random variables and probability distributions, } Cambridge Tracts in Maths. and Math. Physics \textbf{36} Cambridge University Press, London, 1937. 



\bibitem{Feld} {\sc G. Feldman, } \emph{Functional equations and characterization problems on locally compact Abelian groups, } EMS, 2008.

\bibitem{Gu_O} {\sc S. G. Ghurye, I. Olkin, } A characterization of the multivariate normal distribution,  Ann. Math. Statist. \textbf{33} (2) (1962) 533--541. 

\bibitem{HW} {\sc G. H. Hardy, E. M. Wright, } \emph{An Introduction to the Theory of Numbers. Fifth edition.} The Clarendon Press, Oxford University Press, New York, 1979.

\bibitem{h1} {\sc S. Haruki, } On the mean value property of harmonic and complex polynomials, Proc. Japan Acad. Ser. A, \textbf{57}  (1981) 216-218. 

\bibitem{h2} {\sc S. Haruki, } On the theorem of S. Kakutani-M. Nagumo and J.L. Walsh for the mean value property of harmonic and complex polynomials, Pacific J. Math. \textbf{94} (1) (1981) 113-123.

\bibitem{h3} {\sc S. Haruki, } On two functional equations connected with a mean-value property of polynomials, Aequationes Math. \textbf{6} (1971) 275-277.

\bibitem{KLR} {\sc A.M. Kagan, Yu. V. Linnik, C. R. Rao, } \emph{Characterization problems in Mathematical Statistics, } John Wiley \& Sons, 1973.

\bibitem{kn} {\sc S. Kakutani, M. Nagumo, } About the functional equation $\sum_{v=0}^{n-1}f(z+e^{(\frac{2 v \pi}{2})i})=nf(z)$, Zenkoku Shij\^{o} Danwakai, \textbf{66} (1935) 10-12 (in Japanese).

\bibitem{leland} {\sc K. O. Leland, } Finite dimensional translation invariant spaces, \emph{Amer. Math. Monthly} \textbf{75} (1968) 757-758.

\bibitem{LC} {\sc T. Levi-Civit\`a, }  Sulle funzioni che ammettono  una
formula d'addizione del tipo $f(x+y)=\sum_{i=1}^n X_i(x)Y_i(y)$, Atti Accad. Naz. Lincei Rend. \textbf{22} (5) (1913) 181-183.

\bibitem{Lo} {\sc C. Loewner, } On some transformation semigroups invariant under Euclidean and non-Euclidean isometries, J. Math. Mech.  8 (1959) 393-409. 

\bibitem{MaPe} {\sc A. M. Mathai, G. Pederzoli, } \emph{Characterizations of the Normal Probability Law, } Wiley Eastern Limited, 1977.





\bibitem{Sz1} {\sc L. Szekelyhidi} \emph{Convolution type functional equations on Topological Abelian Groups, } World Scientific, 1991.

\bibitem{Sz} {\sc L. Szekelyhidi} \emph{Discrete spectral synthesis and its applications, } Springer, 2006.

\bibitem{V} {\sc V. S. Vladimirov, } \emph{Generalized Functions in Mathematical Physics,} 1979.


\bibitem{wal} {\sc J. L. Walsh, } A mean value theorem for polynomials and harmonic polynomials, Bull. Amer. Math. Soc. \textbf{42} (1936) 923-930. 

\end{thebibliography}
\end{document}